\tikzset{mycolor/.style = {line width=1bp,color=#1}}%
\tikzset{myfillcolor/.style = {draw,fill=#1}}%
\NewDocumentCommand{\highlight}{O{blue!40} m m}{%
\draw[mycolor=#1] (#2.north west)rectangle (#3.south east);
}
\NewDocumentCommand{\fhighlight}{O{blue!40} m m}{%
\draw[myfillcolor=#1] (#2.north west)rectangle (#3.south east);
}
\newtheorem{theorem}{Theorem}[section]
\newtheorem{proposition}[theorem]{Proposition}
\newtheorem{lemma}[theorem]{Lemma}
\newtheorem{question}[theorem]{Question}
\newtheorem{corollary}[theorem]{Corollary}
\newtheorem{problem}[theorem]{Problem}
\newtheorem{example}[theorem]{Example}
\newcommand{\calc}{\mathcal{C}}
\newcommand{\calg}{\mathcal{G}}
\newcommand{\Z}{\mathbb{Z}}
\newcommand{\Q}{\mathbb{Q}}
\newcommand{\bi}{\begin{itemize}}
\newcommand{\ei}{\end{itemize}}
\newcommand{\be}{\begin{enumerate}}
\newcommand{\ee}{\end{enumerate}}
\newcommand{\pd}{\partial}
\newcommand{\K}{\mathcal{K}}
\newcommand{\ups}{\upsilon}
\newcommand{\co}{\colon}
\newtheorem*{rep@theorem}{\rep@title}
\newcommand{\newreptheorem}[2]{%
\newenvironment{rep#1}[1]{%
 \def\rep@title{#2 \ref{##1}}%
 \begin{rep@theorem}}%
 {\end{rep@theorem}}}
\begin{document}
\rhead{\thepage}
\lhead{\author}
\thispagestyle{empty}
\raggedbottom
\pagenumbering{arabic}
\setcounter{section}{0}

%START OF DOCUMENT

%%%%%%%%%%%%%%%%%%%%%%%%%%%%%%%%%%%%%%%%%%%%%%%%%%%%%%%%
%%%%%%%%%%%%%%%%%%%%%%%%%%%%%%%%%%%%%%%%%%%%%%%%%%%%%%%%
%%%%%%%%%%%%%%%%%%%%%%%%%%%%%%%%%%%%%%%%%%%%%%%%%%%%%%%%

\title{Doubly slice knots with low crossing number}
\author{Charles Livingston}\author{Jeffrey Meier}
\thanks{The first author was supported by a grant from the Simons Foundation.  The second author was supported by the National Science Foundation under grant DMS-1400543. \\ %\today
}

\address{Charles Livingston: Department of Mathematics, Indiana University, Bloomington, IN 47405 }
\email{livingst@indiana.edu}
\address{Jeffrey Meier:  Department of Mathematics, Indiana University, Bloomington, IN 47405 }

\email{ jlmeier@indiana.edu}

\begin{abstract} 
 A knot in $S^3$ is doubly slice if it is the cross-section of an unknotted two-sphere in $S^4$.  For low-crossing knots, the most complete work to date gives a classification of doubly slice knots through 9 crossings.   We extend that work through 12  crossings, resolving all but four cases among the 2,977 prime knots in that range.  The techniques involved in this analysis include considerations of the Alexander module and signature functions as well as calculations of the twisted Alexander polynomials for higher-order branched covers.  We give explicit illustrations of the double slicing for each of the 20 knots shown to be smoothly doubly slice. We place the study of doubly slice knots in a larger context by introducing the \emph{double slice genus} of a knot.
\end{abstract}

\maketitle

%%%%%%%%%%%%%%%%%%%%%%%%%%%%%%%%%%%%%%%%%%%%%%%%%%%%%%%%
%%%%%%%%%%%%%%%%%%%%%%%%%%%%%%%%%%%%%%%%%%%%%%%%%%%%%%%%
\section{Introduction}\label{section:intro}
%%%%%%%%%%%%%%%%%%%%%%%%%%%%%%%%%%%%%%%%%%%%%%%%%%%%%%%%
%%%%%%%%%%%%%%%%%%%%%%%%%%%%%%%%%%%%%%%%%%%%%%%%%%%%%%%%

In 1962,  Fox included the following question in his list of  problems in knot theory~\cite{fox:problems}.

\begin{question}
Which slice knots and weakly slice links can appear as the cross-sections of the unknotted $S^2$ in $S^4$.
\end{question}

\noindent Such a knot is called \emph{doubly slice}.  Many of the techniques that have been successful in the study of slice knots and knot concordance over the last 50 years have applications to the study of doubly slice knots and double null concordance of knots.  Nevertheless, doubly slice knots remain far less  understood than their slice counterparts.

The goal of this note is to address Fox's question for prime knots with 12 or fewer crossings.  A precedent for this work was set in 1971 when Sumners showed that for prime knots with nine or fewer crossings, there is only one prime doubly slice knot, namely, the  knot $9_{46}$~\cite{sumners:invertible}.

There are  158  known  prime slice   knots with 12 or fewer crossings, and it is unknown whether the  knot $11_{n34}$ is slice.  Of these 159 knots, we show that at least 20, but no more than 24, are smoothly doubly slice.  

\begin{theorem}\label{thm:Smooth}
The following knots are smoothly doubly slice.
$$
\begin{array}{lllllll}
	9_{46} & 10_{99} & 10_{123} & 10_{155} & 11_{n42} & 11_{n49} & 11_{n74} \\
	12_{a0427} & 12_{a1105}  & 12_{n0268} & 12_{n0309} & 12_{n0313} & 12_{n0397} & 12_{n0414} \\
	 12_{n0430} & 12_{n0605} & 12_{n0636} & 12_{n0706} & 12_{n0817} & 12_{n0838} 
\end{array}
$$
Furthermore, the following are the only other prime knots with 12 or fewer crossings   that could possibly be smoothly doubly slice.
$$
\begin{array}{lllllll}
	11_{n34} & 11_{n73} & 12_{a1019} & 12_{a1202} 
\end{array}
$$
\end{theorem}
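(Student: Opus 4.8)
The plan is to establish Theorem~\ref{thm:Smooth} through a two-directional argument: exhibit explicit double slicings for the twenty knots in the first list, and obstruct double sliceness for all remaining prime knots with at most twelve crossings that are (known or potentially) slice, leaving only the four exceptional cases $11_{n34}$, $11_{n73}$, $12_{a1019}$, and $12_{a1202}$ unresolved. For the positive direction, I would recall that a knot $K$ is doubly slice if and only if it admits a handle decomposition of an unknotted $S^2$ in $S^4$ with $K$ as a cross-section; concretely, it suffices to find two slice disks $D_1, D_2$ for $K$ such that $D_1 \cup_K D_2$ is the unknotted $S^2$. The most efficient route is to recognize each of the twenty knots as arising from a specific construction that is manifestly doubly slice --- for instance, if $K$ is the boundary of a band sum realizing $K \# (-K^r)$ patterns, or if $K$ appears as the intersection of a standardly embedded sphere obtained by spinning or by an explicit movie of immersed curves. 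I would produce, for each knot, a sequence of diagrams (a movie) of the trace of the cross-section from $K$ up through the two critical points to the unknotted $S^2$, verifying at each stage that the indicated band moves are valid and that the terminal stage is a single unknotted circle. This is the content promised by the phrase ``explicit illustrations of the double slicing'' in the abstract.

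For the negative direction, I would run the knots with $\le 12$ crossings that are slice (or of unknown slice status) through a sequence of increasingly refined obstructions to double sliceness, eliminating knots at each stage. The first pass uses the Alexander module: if $K$ is doubly slice, then $\Delta_K(t) = f(t) f(t^{-1})$ for some $f \in \Z[t, t^{-1}]$ \emph{and}, more strongly, the Blanchfield form must split as a metabolizer-pair (the Alexander module must contain two complementary submodules each self-annihilating under the linking form); this already kills many candidates whose Alexander polynomial either fails to factor in the required way or factors uniquely in a manner incompatible with a complementary splitting. The second pass uses the Levine--Tristram signature function: a doubly slice knot has identically vanishing signature function $\sigma_K(\omega) \equiv 0$ for all $\omega \in S^1$, so any slice knot with a nonzero jump is eliminated. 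The third and most delicate pass, for the knots surviving the module and signature tests, uses twisted Alexander polynomials associated to representations of $\pi_1$ of higher-order branched cyclic covers: following the Herald--Kirk--Livingston / Kirk--Livingston framework, one computes the twisted invariant for a prime-power branched cover and a suitable character, and checks whether it admits the Fox--Milnor-type factorization forced by a double slicing; a single such computation yielding a non-factorizable answer obstructs double sliceness.

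The main obstacle will be the third pass: the twisted Alexander polynomial computations for higher-order branched covers are computationally heavy and require careful bookkeeping of the relevant $\Spinc$ / character data and of the metabolizer constraints that a double slicing (as opposed to a mere slicing) imposes. In particular, one must be sure the chosen representations actually ``see'' the obstruction --- a knot can be non-doubly-slice yet pass every invariant one happens to try --- and one must verify that the factorization condition being tested is genuinely a consequence of double sliceness and not merely of sliceness. I anticipate that for a handful of the $12$-crossing knots, several characters will need to be checked before one obstructs, and that the four leftover knots $11_{n34}$, $11_{n73}$, $12_{a1019}$, $12_{a1202}$ are precisely those for which the positive construction is not evident \emph{and} every computed twisted invariant is consistent with double sliceness. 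Organizing the case analysis so that each of the $\sim 135$ non-doubly-slice candidates is dispatched by the cheapest sufficient obstruction, and tabulating which obstruction handles which knot, is the bulk of the bookkeeping; the remaining twenty positive cases are handled once and for all by the movies. A final consistency check is that the two lists are disjoint and exhaust all prime knots with $\le 12$ crossings not already obstructed, which follows from the census of the $158$ (or $159$) known slice knots in that range cited above.
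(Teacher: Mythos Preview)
Your obstruction programme is essentially the paper's: reduce the 2{,}977 prime knots to a short list via the condition $H_1(\Sigma_2(K))\cong G\oplus G$ and algebraic sliceness, then eliminate further candidates using the vanishing of the full Levine--Tristram signature function, the $\Lambda_p$--module structure of the Alexander module, and finally twisted Alexander polynomials of branched covers (with the strengthened double-slice factorization constraint of Theorem~\ref{thm:kl-double}). The order and emphasis differ slightly, but the ingredients match.

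The gap is on the constructive side. You propose to exhibit, for each of the twenty knots, a movie whose terminal frame is ``a single unknotted circle,'' and to regard this as a double slicing. But a movie that starts at an unlink, passes through $K$, and ends at an unlink only produces an \emph{embedded} $2$--sphere in $S^4$; it does not by itself certify that this sphere is \emph{unknotted}. Two ribbon disks for $K$ glued along $K$ can perfectly well give a knotted $2$--sphere. The missing idea is a criterion for unknottedness, and this is exactly what the paper supplies: Donald's band criterion (Theorems~\ref{thm:don1} and~\ref{thm:don2}) says that if the bands $b,c$ realizing the two slicings are disjoint and every reordering of the band moves still yields unlinks at each stage, then Scharlemann's theorem forces the resulting sphere to be standard. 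Seventeen of the twenty knots are handled this way (one, $12_{n0636}$, needs the two-band-pair version). The remaining three ($11_{n42}$, $12_{n0313}$, $12_{n0430}$) are shown to be \emph{superslice} via a tube-passing isotopy (Proposition~\ref{prop:super}), which again reduces unknottedness to an elementary check. Your references to $K\#(-K^r)$ and spinning are not the mechanisms actually used. Without some such unknottedness criterion, the positive half of your argument does not go through.
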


Our contributions to this computation include the following.
\begin{itemize}
\item The first application of twisted Alexander polynomials to obstruct double sliceness.

\item The first low-crossing examples of slice knots with non-vanishing signature function.  

\item Explicit constructions of unknotted embeddings of $S^2$ into $S^4$ with equatorial cross-section isotopic to each of the 20 knots on the list.
\end{itemize}

As mentioned above, $9_{46}$ was first double sliced by Sumners~\cite{sumners:invertible}, while $11_{n42}$ was shown to be doubly slice in~\cite{carter-kamada-saito}. The double slicing of $10_{123}$ included below was shown to the second author by Donald, who has contributed to the study of double slice knots by studying the problem of embedding 3--manifolds into $S^4$~\cite{donald:embedding}.  
 
We show below that the Conway knot $11_{n34}$ is topologically doubly slice (see Section~\ref{section:topological}), but it is unknown whether it can be smoothly sliced or double sliced.

%%%%%%%%%%%%%%%%%%%%%%%%%%%%%%%%%%%%%%%%%%%%%%%%%%%%%%%
\subsection{A brief history of doubly slice knots}\ 
%%%%%%%%%%%%%%%%%%%%%%%%%%%%%%%%%%%%%%%%%%%%%%%%%%%%%%%

The study of slice knots is naturally placed in the context of the concordance group $\calc$ and the homomorphism $\phi\co \calc \to \calg$, where $\calg$ is the algebraic concordance group,  defined  and classified  by Levine~\cite{levine:invariants, levine:groups}.  There are analogous groups $\calc_{ds}$ and $\calg_{ds}$ defined in the context of doubly slice knots; however, Levine's classification of $\calg$ does not carry over to $\calg_{ds}$, and there are other complications that make $\calc_{ds}$ and $\calg_{ds}$ difficult to study.

It is known that the kernel of the canonical map $\calg_{ds} \to \calg$ in infinitely generated~\cite{cha-liv:signature}, but beyond that, the structure of $\calg_{ds}$ remains a mystery.  (See, however,~\cite{stoltzfus-bayer-fluckiger, stoltzfus:unraveling,stoltzfus:double}.)  Furthermore, it can be shown using Casson-Gordon invariants that there are algebraically doubly slice knots that are not topologically doubly slice~\cite{gilmer-livingston:embedding}.  Friedl developed further metabelian invariants that can be used to obstruct double sliceness~\cite{friedl:eta}.

As in the study of slice knots, there is an important distinction  between the smooth and topologically locally flat categories.  However, this distinction does not feature prominently in our work here; we find no low-crossing examples of knots that are topologically doubly slice but not smoothly doubly slice, even though  such knots have been shown to exist~\cite{meier:double}. Other interesting constructions in the study of doubly slice knots include the fibered examples of Aitchison-Silver~\cite{ait-silver} and the extension of the Cochran-Teichner-Orr filtration to topologically doubly slice knots by Kim~\cite{kim:new}.

%%%%%%%%%%%%%%%%%%%%%%%%%%%%%%%%%%%%%%%%%%%%%%%%%%%%%%%
\subsection{Organization}\ 
%%%%%%%%%%%%%%%%%%%%%%%%%%%%%%%%%%%%%%%%%%%%%%%%%%%%%%%

In Sections~\ref{section:algebraic} and~\ref{section:topological}, we discuss obstructions to double slicing knots coming from the algebraic and topological categories, respectively.  In Section~\ref{section:slicing}, we discuss some techniques that can be used to construct double slicings of knots in either the topological or smooth categories.  In Section~\ref{sec:genus}, we place the study of doubly slice knots in context by considering knots as cross-sections of unknotted surfaces in $S^4$.  

%%%%%%%%%%%%%%%%%%%%%%%%%%%%%%%%%%%%%%%%%%%%%%%%%%%%%%%%
%%%%%%%%%%%%%%%%%%%%%%%%%%%%%%%%%%%%%%%%%%%%%%%%%%%%%%%%
\section{Algebraic obstructions to double slicing knots}\label{section:algebraic}
%%%%%%%%%%%%%%%%%%%%%%%%%%%%%%%%%%%%%%%%%%%%%%%%%%%%%%%%
%%%%%%%%%%%%%%%%%%%%%%%%%%%%%%%%%%%%%%%%%%%%%%%%%%%%%%%%

In this section, we will present three algebraic obstructions to double slicing a knot. These are  applied to obtain an initial list of prime knots with at most twelve crossings that could potentially be %algebraically 
doubly slice.

%%%%%%%%%%%%%%%%%%%%%%%%%%%%%%%%%%%%%%%%%%%%%%%%%%%%%%%%%%%%%%%%%%%%%
\subsection{Hyperbolic torsion coefficients}\ 
%%%%%%%%%%%%%%%%%%%%%%%%%%%%%%%%%%%%%%%%%%%%%%%%%%%%%%%%%%%%%%%%%%%%%

A knot $K$ in $S^3$ is said to be \emph{algebraically doubly slice} if there exists a Seifert matrix $A_K$ for $K$ that has the form
$$A_K = \begin{bmatrix}
 0 & B_1 \\
 B_2 & 0 
 \end{bmatrix},$$
where $B_1$ and $B_2$ are square matrices of equal dimension.  Matrices of this form are called \emph{hyperbolic} and have been studied by Levine~\cite{levine:hyperbolic} and others~\cite{cha-liv:signature,stoltzfus:double}.  If $K$ is (smoothly or topologically) doubly slice, then $K$ is algebraically double slice~\cite{sumners:invertible}.  

Let $A_K$ be a hyperbolic Seifert matrix for $K$.  Then, 
$$A_K+A_K^T = \begin{bmatrix}
 0 & B \\
 B^T & 0 
 \end{bmatrix},$$
where $B=B_1+B_2^T$.  The matrix $B \oplus B$ is a presentation matrix for $H_1(\Sigma_2(K))$.  It follows that $H_1(\Sigma_2(K))$ splits as a direct sum $G \oplus G$, where  $G$ is presented by the matrix $B$. Thus, we have our first obstruction.

\begin{proposition}\label{prop:hom}
Let $K$ be a knot in $S^3$.  If $K$ is algebraically doubly slice, then, for some finite group $G$, $H_1(\Sigma_2(K))=G\oplus G$.
\end{proposition}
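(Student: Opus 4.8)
The argument is essentially contained in the discussion preceding the statement; my plan is to organize it as a formal proof and fill in the two small gaps. First I would recall the standard fact that for \emph{any} Seifert matrix $A_K$ of a knot $K$, the symmetrized matrix $A_K + A_K^T$ is a presentation matrix for $H_1(\Sigma_2(K))$ (see, e.g., Lickorish or Kawauchi), and that this group is finite: indeed $\det(A_K + A_K^T) \doteq \Delta_K(-1) \neq 0$, so $A_K + A_K^T$ is nonsingular over $\Q$ and its cokernel is a finite abelian group.

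Next, assuming $K$ is algebraically doubly slice, I would fix a hyperbolic Seifert matrix $A_K = \begin{bmatrix} 0 & B_1 \\ B_2 & 0 \end{bmatrix}$ and form, as in the text,
$$A_K + A_K^T = \begin{bmatrix} 0 & B \\ B^T & 0 \end{bmatrix}, \qquad B = B_1 + B_2^T,$$
where $B$ is a square integer matrix of size $n$ and $A_K$ has size $2n$. The key step is the cokernel computation for this antidiagonal block matrix. Regarding it as the map $\Z^n \oplus \Z^n \to \Z^n \oplus \Z^n$, $(u,v) \mapsto (Bv, B^T u)$, its image is precisely $B\Z^n \oplus B^T\Z^n$, so
$$H_1(\Sigma_2(K)) \cong \cokerr(B) \oplus \cokerr(B^T).$$
Finally, $B$ and $B^T$ have the same Smith normal form, hence the same invariant factors, so $\cokerr(B) \cong \cokerr(B^T)$; calling this group $G$ gives $H_1(\Sigma_2(K)) \cong G \oplus G$, with $G$ finite by the first paragraph.

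I do not expect a genuine obstacle here: the only points needing care are the bookkeeping in the cokernel computation for the block matrix and the elementary observation that $\cokerr(B) \cong \cokerr(B^T)$ for a square integer matrix, both of which are routine linear algebra over $\Z$. If one preferred not to quote the branched-cover presentation fact, one could instead derive that $A_K + A_K^T$ presents $H_1(\Sigma_2(K))$ from a handle decomposition associated to a pushed-in Seifert surface, but citing the standard reference is cleaner.
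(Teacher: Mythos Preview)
Your proof is correct and follows exactly the route the paper takes: the paper's argument is the single sentence ``The matrix $B \oplus B$ is a presentation matrix for $H_1(\Sigma_2(K))$,'' and you have simply unpacked that sentence by computing the cokernel of the antidiagonal block matrix and invoking the Smith normal form to identify $\cokerr(B)$ with $\cokerr(B^T)$. The finiteness remark you add is a welcome detail the paper leaves implicit.
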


Of the 2,977 prime knots with at most 12 crossings, 62 knots  satisfy Proposition~\ref{prop:hom}. Furthermore, if $K$ is algebraically doubly slice, then $K$ is algebraically slice. Among these 62 knots, there are 36 that are algebraically slice.  These knots form our short-list of candidates to be algebraically doubly slice  and are shown below.

$$
\begin{array}{lllllllll}
	9_{41} & 9_{46} & 10_{99} & 10_{123} & 10_{153} & 10_{155} & 11_{n34} & 11_{n42} \\
	11_{n49} & 11_{n73} & 11_{n74} & 11_{n116} & 12_{a0427} & 12_{a1019} & 12_{a1105} & 12_{a1202} \\
	12_{n0019}  & 12_{n0210} & 12_{n0214} & 12_{n0257} & 12_{n0268} & 12_{n0309} & 12_{n0313} & 12_{n0318} \\
	12_{n0397} & 12_{n0414} & 12_{n0430} & 12_{n0440} & 12_{n0582} & 12_{n0605} & 12_{n0636} & 12_{n0706} \\
	12_{n0813} & 12_{n0817} & 12_{n0838} & 12_{n0876}
\end{array}
$$

%%%%%%%%%%%%%%%%%%%%%%%%%%%%%%%%%%%%%%%%%%%%%%%%%%%%%%%%%%%%%%%%%%%%%
\subsection{The signature function}\label{subsec:signature}\ 
%%%%%%%%%%%%%%%%%%%%%%%%%%%%%%%%%%%%%%%%%%%%%%%%%%%%%%%%%%%%%%%%%%%%%

Let $K$ be a knot in $S^3$  with Seifert matrix  $A_K$. Let $\omega$ be a unit complex number, and consider the matrix
$$(1-\omega)A_K+(1-\overline\omega)A_K^T.$$
Denote by  $\sigma_\omega(K)$   the signature of this matrix.  Note that this matrix will be non-singular provided that $\Delta_K(\omega)\not=0$, where $\Delta_K(t)$ is the Alexander polynomial of $K$.  In any event, $\sigma_K(\omega)$ is a well-defined knot invariant for any unit complex number $\omega$.  See~\cite{gordon:knot_theory} for details.  It is well-known that $|\sigma_K(\omega)|\leq 2g_4(K)$ whenever $\Delta_K(\sigma)\not=0$.  Thus, if $K$ is algebraically slice, then $\sigma_\omega(K)=0$ away from the roots of the Alexander polynomial.  Moreover, we have the following.

\begin{proposition}\label{prop:sign}
Let $K$ be a knot in $S^3$.  If $K$ is algebraically doubly slice, then $\sigma_\omega(K)=0$ for any unit complex number $\omega$.
\end{proposition}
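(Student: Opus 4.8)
The plan is to compute $\sigma_\omega(K)$ directly from a hyperbolic Seifert matrix. Since $\sigma_\omega$ is a well-defined knot invariant, independent of the choice of Seifert surface (see~\cite{gordon:knot_theory}), and $K$ is algebraically doubly slice, I may take a Seifert matrix of hyperbolic form
$$A_K=\begin{bmatrix} 0 & B_1\\ B_2 & 0\end{bmatrix},$$
with $B_1,B_2$ real $n\times n$ matrices. Because the $B_i$ are real, transposing and forming the relevant combination gives
$$(1-\omega)A_K+(1-\overline\omega)A_K^T=\begin{bmatrix} 0 & C\\ C^* & 0\end{bmatrix},\qquad C=(1-\omega)B_1+(1-\overline\omega)B_2^T,$$
a Hermitian $2n\times 2n$ matrix whose two diagonal blocks vanish; indeed the lower-left block equals $(1-\omega)B_2+(1-\overline\omega)B_1^T=C^*$.

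The only real content is then the linear-algebra fact that a Hermitian matrix $M$ of the block form $\begin{bmatrix} 0 & C\\ C^* & 0\end{bmatrix}$ has signature zero, where I use the convention that signature is the number of positive eigenvalues minus the number of negative eigenvalues. I would prove this by pairing eigenspaces: writing vectors as $(x,y)$ with $x,y\in\C^n$, if $M(x,y)=\lambda(x,y)$ then $M(x,-y)=-\lambda(x,-y)$, so the isomorphism $(x,y)\mapsto(x,-y)$ carries the $\lambda$-eigenspace of $M$ onto its $(-\lambda)$-eigenspace. Hence corresponding eigenspaces have equal dimension for every $\lambda$; summing over $\lambda>0$ shows $M$ has equally many positive and negative eigenvalues, so $\sigma(M)=0$. (Alternatively, the $n$-dimensional subspace $\{(x,0)\}$ lies in the kernel of the form and meets $\ker M=\ker C^*\oplus\ker C$ in exactly half of it, so this metabolizer descends to the nondegenerate quotient and forces the signature to vanish; the eigenvalue argument is cleaner since it needs no case analysis.) Applying this with $M=(1-\omega)A_K+(1-\overline\omega)A_K^T$ yields $\sigma_\omega(K)=0$.

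I do not expect a serious obstacle; the proof is short. The only point meriting a word of care is that when $\omega$ is a root of $\Delta_K(t)$ the matrix $(1-\omega)A_K+(1-\overline\omega)A_K^T$ is singular, which is exactly why I prefer the eigenvalue-pairing argument: it is insensitive to degeneracy, so no separate treatment is needed. Structurally this proposition is the $\omega$-parametrized refinement of the computation preceding Proposition~\ref{prop:hom}, to which it specializes at $\omega=-1$, where $A_K+A_K^T$ and the splitting $H_1(\Sigma_2(K))=G\oplus G$ reappear.
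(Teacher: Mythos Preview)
Your proof is correct. The paper states Proposition~\ref{prop:sign} without proof, treating it as essentially immediate from the definition, so there is no argument in the paper to compare against; your direct computation is exactly the expected one. The key observation you isolate---that for a Hermitian matrix $M=\begin{bmatrix}0&C\\ C^*&0\end{bmatrix}$ conjugation by $J=\begin{bmatrix}I&0\\0&-I\end{bmatrix}$ gives $JMJ^{-1}=-M$, so the spectrum is symmetric about $0$ and the signature vanishes---is the clean way to handle all $\omega$ uniformly, including roots of $\Delta_K$, and it matches the spirit of the paper's remark that $\sigma_\omega(K)$ is a well-defined invariant for every unit $\omega$.
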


In fact, we can consider these signature invariants as a function $\sigma(K):S^1\to\Z$, defined by $\sigma(K)(\omega)=\sigma_\omega(K)$, called the \emph{signature function}.  If a knot $K$ satisfies Proposition~\ref{prop:sign}, we say that the signature function for $K$ \emph{vanishes}.

\begin{example}
	Let $K=12_{n0582}$.  Then, $\Delta_K(t) = (t^2-t+1)^2$, and the roots of $\Delta_K(t)$ are contained on the unit circle.  Since $K$ is slice, we know that $\sigma_K(\omega)=0$ away from these roots.  However, if we consider the roots, $\zeta$ and $\overline\zeta$, where $\zeta$ is a sixth root of unity, we can compute that $\sigma_{\zeta}(K)=\sigma_{\overline\zeta}(K)=-1$. (Note that this calculation depends on a Seifert matrix $A_K$, but any choice will do and we do not include the details here.) 
It follows from Proposition~\ref{prop:sign} that $K$ cannot be algebraically doubly slice.
\end{example}

\begin{example}
	Let $K=12_{n0813}$.  Then, $\Delta_K(t) = (t-2)(2t-1)(t^2-t+1)^2$. Two of the roots of $\Delta_K(t)$ are primitive sixth roots of unity; the other two roots do not lie on the unit circle, so no information can be gained by considering them.  If we consider the roots of unity, we find that $\sigma_{\zeta}(K)=\sigma_{\overline\zeta}(K)=+1$. (Again, we have used some matrix $A_K$ for this calculation.)  It follows from Proposition~\ref{prop:sign} that $K$ cannot be algebraically doubly slice.
\end{example}

Thus, we remove $12_{n0582}$ and $12_{n0813}$ from our list of potentially algebraically doubly slice knots.

%%%%%%%%%%%%%%%%%%%%%%%%%%%%%%%%%%%%%%%%%%%%%%%%%%%%%%%%%%%%%%%%%%%%%
\subsection{The Alexander module}\label{subsec:module}\ 
%%%%%%%%%%%%%%%%%%%%%%%%%%%%%%%%%%%%%%%%%%%%%%%%%%%%%%%%%%%%%%%%%%%%%

Continuing, let $K$ be a knot in $S^3$  and let $X_\infty(K)$ denote the infinite cyclic cover of $S^3\setminus K$.  The group $H_1(X_\infty(K))$ can be regarded as a $\Lambda$--module, where $\Lambda = \Z[t,t^{-1}]$.  This $\Lambda$--module is called the \emph{Alexander module} and is presented by the matrix $V_K=A_K-tA_K^T$.  

Sumners   obstructed  $9_{41}$ from being doubly slice by carefully analyzing the module structure of $H_1(X_\infty(K))$.   We follow a similar approach to  analyze two more knots.
 
We begin by switching to coefficients in the  finite field with $p$ elements,  ${\mathbb{Z}_p}$.  In this case,  $ H_1(X_\infty(K),{\mathbb{Z}_p} )$ is a module over a PID, $\Lambda_p = {\mathbb{Z}_p}[t,t^{-1}]$.   We now have that  if $K$ is doubly slice, then as a $\Lambda_p$--module,  $$H_1(X_\infty(K),{\mathbb{Z}_p} )\cong \bigoplus_i \left( \Lambda_p / \left<f_i(t) \right> \oplus \Lambda_p /  \left<f_i(t^{-1}) \right>\right) $$ for some set of polynomials $f_i(t) \in \Lambda_p$.

\begin{example}
Let $K=11_{n116}$, which has $\Delta_K(t) = (1+t-t^2)(-1+t+t^2)$. Using the Seifert form $V_K$ taken from KnotInfo~\cite{cha-liv:knotinfo} and working with  ${\mathbb{Z}_2}	$--coefficients, we find that as a $\Lambda_2$--module,  $$  H_1(X_\infty(K),{\mathbb{Z}_2} )\cong \Lambda_2  /  \left<( 1+t+t^2)^2  \right> .$$ This does not decompose as a nontrivial  direct sum of modules, so $K=11_{n116}$ cannot be doubly slice.
\end{example}

\begin{example}
Let $K=12_{n0876}$, which has $\Delta_K(t) = (-2+4t-2t^2+t^3)(-1+2t-4t^2+2t^3)$.  Again using the Seifert form $V_K$ taken from KnotInfo, but now working with  ${\mathbb{Z}_3}$--coefficients, we compute that as a $\Lambda_3$--module,  
	$$  H_1(X_\infty(K),{\mathbb{Z}_3} )\cong   \Lambda_3/  \left<(  1+t )^2  \right> \oplus \Lambda_3/  \left<(  1 + t^2 )^2  \right> .$$ This does not decompose further, so $12_{n0876}$ cannot be doubly slice. 
\end{example}

\subsection{Algebraic conclusions}\ 

In conclusion, consideration of the torsion invariants reduced our search for doubly slice knots  to a set of $36$ knots.  An analysis of the signature function removed another two, and an examination of Alexander modules eliminate three more, including the one found by Sumners.  Of the remaining 31 knots, we will use the techniques described in Section~\ref{section:slicing} to show that one is topologically doubly slice and 20 are smoothly doubly slice.  It follows that these 21 knots are algebraically doubly slice, leaving us with only 10 knots that may or may not be algebraically doubly slice.

\begin{question}
Are any of the following knots algebraically doubly slice?

$$
\begin{array}{lllllllll}
	 10_{153} & 11_{n73} &  12_{a1019} & 12_{a1202} & 12_{n0019}    \\
	 12_{n0210} & 12_{n0214} & 12_{n0257} & 12_{n0318} & 12_{n0440}   \\
	 
\end{array}
$$\vskip.2in
	
\end{question}

%%%%%%%%%%%%%%%%%%%%%%%%%%%%%%%%%%%%%%%%%%%%%%%%%%%%%%%%
%%%%%%%%%%%%%%%%%%%%%%%%%%%%%%%%%%%%%%%%%%%%%%%%%%%%%%%%
\section{Topological obstructions to double slicing knots}\label{section:topological}
%%%%%%%%%%%%%%%%%%%%%%%%%%%%%%%%%%%%%%%%%%%%%%%%%%%%%%%%
%%%%%%%%%%%%%%%%%%%%%%%%%%%%%%%%%%%%%%%%%%%%%%%%%%%%%%%%

We now move from abelian to metabelian invariants.  We begin by quickly recalling the twisted polynomial.   Let $M_q(K)$ be the $q$--fold cyclic cover of $S^3\setminus K$, let $\Sigma_q(K)$ be the branched cyclic cover,  and let $\rho \co H_1(\Sigma_q(K)) \to \Z_p$ be a homomorphism, where $q$ is a prime power and $p$ is an odd prime. Let $\Gamma_p=\Q(\zeta_p)[t,t^{-1}]$, where $\zeta_p$ is a primitive $p^\text{th}$--root of unity.  As described in~\cite{kirk-liv:twisted}, there is an associated {\it twisted Alexander polynomial} $\Delta_{K,\rho}(t) \in \Gamma_p$.  This polynomial is well-defined up to multiplication by a unit in $\Gamma_p$.  Given $f(t)\in\Gamma_p$, let $\overline{f(t)}$ denote the result of complex conjugation of the coefficients of $f(t)$.

%\marginpar{\textcolor{blue}{I adopted $\Gamma_p$ as short-hand here, since we already used  $\Lambda_p$ earlier.}}

A result of~\cite{kirk-liv:twisted} states the following.

\begin{theorem}\label{thm:kl}If $K$ is slice, then there is a subgroup $H \subset  H_1(\Sigma_q(K))$ satisfying the following properties.
\begin{enumerate}
\item  $|H|^2  =  |H_1(\Sigma_q(K))|.$\vskip.05in

\item The subgroup $H$ is invariant under the action of the deck transformation of $\Sigma_q(K)$.
\vskip.05in
\item For all $\rho \co H_1(\Sigma_q(K)) \to \Z_p$ satisfying $\rho(H) = 0$, one has  $\Delta_{K,\rho}(t) = f(t)\overline{f(t^{-1})}$.
\end{enumerate}

\end{theorem}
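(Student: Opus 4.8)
The proof I have in mind is the Casson--Gordon-style argument carried out in the twisted setting, which is essentially what Kirk and Livingston do. The starting point is to promote a slice disk to a $4$--manifold with the right homology: fix a (smooth or locally flat) disk $D\subset B^4$ with $\partial D=K$ and let $W=W_q$ be the $q$--fold cyclic cover of $B^4$ branched over $D$, so that $\partial W=\Sigma_q(K)$. Because $q$ is a prime power, a transfer argument shows $W$ is a rational homology $4$--ball, and it inherits a $\Z/q$--action restricting to the deck transformation on $\partial W$. Define $H=\ker\big(H_1(\Sigma_q(K))\to H_1(W)\big)$. The linking form on $H_1(\Sigma_q(K))$ (which makes sense because $\Sigma_q(K)$ is a rational homology sphere) vanishes on $H$, and the ``half lives, half dies'' consequence of Poincar\'e--Lefschetz duality shows $H$ is a metabolizer, which gives (1); the invariance of $H$ under the deck transformation, statement (2), is immediate from naturality under the $\Z/q$--action on $W$.

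For (3), fix $\rho\co H_1(\Sigma_q(K))\to\Z_p$ with $\rho(H)=0$. Removing an open tubular neighborhood of the branch locus from $W$ produces a compact $4$--manifold $Y$ with $\partial Y=M_q(K)$, and since the branch locus is an unknotted disk, $Y$ has the rational homology of $S^1\times D^3$. The linking-number homomorphism with the branch locus gives a map $H_1(Y)\to\Z$, and the vanishing $\rho(H)=0$ is what is needed to extend $\rho$ over $Y$, i.e.\ to a map $H_1(Y)\to\Z_p$. Together these define a one-dimensional representation of $\pi_1(Y)$ over $\Gamma_p=\Q(\zeta_p)[t,t^{-1}]$ whose restriction to $\partial Y=M_q(K)$ is the representation used to define $\Delta_{K,\rho}(t)$.

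The remaining --- and main --- step is a duality computation identifying $\Delta_{K,\rho}(t)$, up to a unit of $\Gamma_p$, with a twisted Reidemeister torsion of $M_q(K)$ and showing this torsion is a norm. Concretely: using that $Y$ has the rational homology of $S^1\times D^3$ and that $\Gamma_p$ is a PID, one first shows the $\Gamma_p$--homology of $Y$ is torsion in the relevant degrees; then, running the twisted homology long exact sequence of the pair $(Y,\partial Y)$ together with twisted Poincar\'e--Lefschetz duality $H_*(Y;\Gamma_p)\cong H^{4-*}(Y,\partial Y;\Gamma_p)$, the order of the homology ``on one side'' becomes the conjugate of the order ``on the other side.'' The conjugate-linear structure on $\Gamma_p$ --- sending $t\mapsto t^{-1}$ (reversal of orientation) and $\zeta_p\mapsto\zeta_p^{-1}$ (passage to the dual representation) --- is exactly what produces the factor $\overline{f(t^{-1})}$ alongside $f(t)$, yielding $\Delta_{K,\rho}(t)=f(t)\overline{f(t^{-1})}$.

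I expect the genuine difficulty to lie entirely in this last paragraph: one must make twisted Poincar\'e--Lefschetz duality interact cleanly with the notion of ``order of a $\Gamma_p$--module,'' keep track of the finitely many contributions coming from $H_0$ and from the connecting maps in the long exact sequence so that no extraneous polynomial factors survive, and verify that the two independent sources of complex conjugation combine to give $\overline{f(t^{-1})}$ rather than, say, $f(t^{-1})$ or $\overline{f(t)}$. Arranging the statement so that \emph{every} $\rho$ vanishing on $H$ --- not merely a convenient one --- extends over $Y$ is a secondary technical point, and it is here, together with the rational-homology-ball property of $W$, that the hypotheses ``$q$ a prime power'' and ``$p$ an odd prime'' are genuinely used.
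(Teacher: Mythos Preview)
The paper does not actually prove this theorem; it is quoted as a result of Kirk--Livingston~\cite{kirk-liv:twisted} and then used as a black box. The only hint the paper gives toward the argument appears in the proof of Theorem~\ref{thm:kl-double}, where it is remarked that the metabolizer $H$ is taken to be the kernel of $H_1(\Sigma_q(K))\to H_1(\overline{W}_q(D))$ for $\overline{W}_q(D)$ the $q$--fold branched cover of $B^4$ over a slice disk $D$, and that ``the rest of the argument follows identically to that in~\cite{kirk-liv:twisted}.'' Your sketch reproduces exactly this set-up and then outlines the Kirk--Livingston duality computation for the twisted polynomial, so it is consistent with what the paper indicates and with the source it cites. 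There is nothing further to compare.
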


If $K$ is doubly slice, then it satisfies strengthened conditions.

\begin{theorem}\label{thm:kl-double}
If $K$ is doubly slice, then there is a splitting $   H_1(\Sigma_q(K)) \cong H_1 \oplus H_2$ satisfying the following properties.
\begin{enumerate}
\item  $H_1 \cong H_2$.\vskip.05in
\item The subgroups $H_1$ and $H_2$ are invariant under the action of the deck transformation of $\Sigma_q(K)$.
\vskip.05in
\item For all $\rho \co H_1(\Sigma_q(K)) \to \Z_p$ for which $\rho(H_1) = 0$ or $\rho(H_2) = 0$, one has  that $\Delta_{K,\rho}(t) = f(t)\overline{f(t^{-1})}$.
\end{enumerate}
\end{theorem}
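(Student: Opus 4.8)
The plan is to mimic the proof of Theorem~\ref{thm:kl}, but to run the slice-disk argument twice simultaneously, once for each of the two slice disks coming from a doubly slice structure. Recall that if $K$ is doubly slice, then there is an unknotted $S^2 \subset S^4$ meeting the equatorial $S^3$ in $K$; cutting $S^4$ along the equator expresses $S^2$ as a union $D_1 \cup_K D_2$ of two smooth slice disks, with the key extra feature that gluing the two disk exteriors $W_i = B^4 \setminus \nu(D_i)$ along their common boundary $S^3 \setminus \nu(K)$ yields $S^4 \setminus \nu(S^2)$, which is a homology $S^1 \times B^3$. First I would pass to the $q$--fold branched cyclic covers: let $V_i$ be the $q$--fold cyclic cover of $B^4$ branched over $D_i$, so that $V_1 \cup_{\Sigma_q(K)} V_2$ is the $q$--fold cyclic cover of $S^4$ branched over the unknotted $S^2$, which is again a homology $S^1 \times B^3$ (indeed $S^1\times B^3$, since branched covers of unknotted spheres are standard). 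Setting $H_i = \ker\!\left( H_1(\Sigma_q(K)) \to H_1(V_i) \right)$, a Mayer--Vietoris argument over this homology $S^1\times B^3$ shows that the two maps $H_1(\Sigma_q(K)) \to H_1(V_i)$ are ``complementary'': their kernels $H_1, H_2$ satisfy $H_1 \oplus H_2 = H_1(\Sigma_q(K))$ and $H_1 \cong H_2$ (each is isomorphic to the other's image, and the orders multiply to $|H_1(\Sigma_q(K))|$ as in Theorem~\ref{thm:kl}(1)). Deck-transformation invariance of each $H_i$ is automatic, since the covering $V_i \to B^4$ is compatible with the $\Z/q$ action and $H_i$ is the kernel of an equivariant map; this gives properties (1) and (2).

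For property (3), I would follow the twisted-polynomial half of~\cite{kirk-liv:twisted} verbatim, applied to whichever disk $D_i$ satisfies $\rho(H_i)=0$. Fix $\rho\co H_1(\Sigma_q(K))\to\Z_p$ with, say, $\rho(H_1)=0$. Because $H_1 = \ker(H_1(\Sigma_q(K))\to H_1(V_1))$, the map $\rho$ factors through $H_1(V_1)$, hence (after composing with the covering data) defines a representation of $\pi_1(M_q(K))$ that extends over the disk exterior's $q$--fold cover. One then has the Wang-type long exact sequence relating the twisted homology of the bounding 4--manifold to that of $M_q(K)$, and the usual duality/half-lives-half-dies argument in the twisted setting forces the twisted Alexander polynomial $\Delta_{K,\rho}(t)$ to factor as $f(t)\overline{f(t^{-1})}$ for some $f\in\Gamma_p$. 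The point is that none of this part of the argument is new: it is exactly the content of Theorem~\ref{thm:kl} applied to the single slice disk $D_1$, and symmetrically $D_2$ handles the case $\rho(H_2)=0$. So the new input is entirely in the homological splitting, not in the twisted polynomial computation.

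The main obstacle, and the step I would be most careful about, is establishing the direct-sum splitting $H_1(\Sigma_q(K)) \cong H_1 \oplus H_2$ with $H_1 \cong H_2$ and verifying it is a splitting of $\Z[\Z/q]$--modules rather than merely of abelian groups; the deck action must be respected for clause (2) to make sense as stated. Concretely I would run Mayer--Vietoris with $\Z$ coefficients for the decomposition $S^1\times B^3 \simeq V_1 \cup_{\Sigma_q(K)} V_2$, extract that $H_1(\Sigma_q(K)) \to H_1(V_1)\oplus H_1(V_2)$ has image of index equal to $|H_1(S^1\times B^3)|$-torsion contributions (which vanish), deduce the map is injective with the two projections surjective, and then identify $H_i = \ker(\text{projection to }H_1(V_{3-i}))$ so that $H_1(\Sigma_q(K)) = H_1 \oplus H_2$ via the complementary kernels; equivariance is inherited throughout because every manifold and map in sight carries a compatible $\Z/q$ action. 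A secondary technical point is confirming that $H_i$ as defined via the covers agrees with the ``obvious'' metabolizer-type subgroup so that $\rho(H_i)=0$ really is the hypothesis that makes $\rho$ extend over $V_i$; this is where one must be slightly attentive to the difference between $H_1(\Sigma_q)$ and $H_1(M_q)$ and the branching, exactly as in the proof of Theorem~\ref{thm:kl}.
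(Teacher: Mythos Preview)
Your approach is essentially the paper's: pass to the $q$--fold branched cover of $S^4$ over the unknotted $2$--sphere, split along $\Sigma_q(K)$, take $H_i$ to be the kernels of the two inclusion-induced maps, apply Mayer--Vietoris for the splitting, and then invoke Theorem~\ref{thm:kl} disk-by-disk for property (3). Two points need fixing, however.

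First, you have misidentified the total branched cover. If $V_i$ is the $q$--fold \emph{branched} cover of $B^4$ over $D_i$, then $V_1\cup_{\Sigma_q(K)}V_2$ is a closed $4$--manifold, so it cannot be $S^1\times B^3$. In fact the $q$--fold branched cover of $S^4$ over an unknotted $S^2$ is $S^4$ itself (use $S^4 = S^2\times D^2 \cup D^3\times S^1$). This is exactly what the paper uses: $\Sigma_q(K)$ embeds in $S^4$, splitting it into $Y_1$ and $Y_2$. The confusion seems to be with the unbranched cover of the \emph{complement}, which is indeed $S^1\times B^3$; but that is a different space. Once you replace ``homology $S^1\times B^3$'' by $S^4$, Mayer--Vietoris becomes cleaner: $H_2(S^4)=H_1(S^4)=0$ gives directly that $H_1(\Sigma_q(K))\to H_1(Y_1)\oplus H_1(Y_2)$ is an isomorphism, from which the internal direct-sum decomposition into the two kernels follows.

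Second, your justification of $H_1\cong H_2$ is incomplete. Mayer--Vietoris gives $H_1\cong H_1(Y_2)$ and $H_2\cong H_1(Y_1)$, and the order argument you sketch only yields $|H_1|=|H_2|$. The isomorphism $H_1(Y_1)\cong H_1(Y_2)$ requires a duality argument for a closed $3$--manifold separating $S^4$; the paper attributes this to Hantzsche~\cite{hantzche}.
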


\begin{proof}The proof is very similar to that of Theorem~\ref{thm:kl} in~\cite{kirk-liv:twisted}, so we just summarize it here.

In Theorem~\ref{thm:kl}, the subgroup $H$ can be taken as the kernel of the inclusion $\Sigma_q(K) \to \overline{W}_q(D)$, where $\overline{W}_q(D)$ is the $q$--fold branched cover of $B^4$ over a slice disk $D$ of $K$.  In the case that $K$ is doubly slice, the $q$--fold branched cover $\Sigma_q(K)$ embeds in $S^4$, since $S^4$ is the  the $q$--fold branched cover of $S^4$ over the (unknotted) double slicing 2--sphere for $K$.  
  It follows that $\Sigma_q(K)$ splits $S^4$ into manifolds $Y_1$ and $Y_2$. 
  
   The subgroups $H_1$ and $H_2$ can be taken as the kernels of the inclusions $H_1(\Sigma_q(K)) \to Y_1$ and $H_1(\Sigma_q(K)) \to Y_2$.  The direct sum decomposition arises from the Meyer-Vietoris Theorem;  the fact that $H_1 \cong H_2$ follows from duality, as first noticed by Hantzche~\cite{hantzche}.

The rest of the argument follows identically to that in~\cite{kirk-liv:twisted}.
\end{proof}

Equipped with Theorem~\ref{thm:kl-double}, we are ready to prove our second result.

\begin{theorem}\label{thm:NotTop}
	The following knots are not topologically doubly slice, but might be algebraically doubly slice.

$$
\begin{array}{lllllll}
	 10_{153} & 12_{n0019}  &12_{n0210}& 12_{n0214} \\
	 12_{n0257} & 12_{n0318} & 12_{n0440} 
\end{array}
$$

\end{theorem}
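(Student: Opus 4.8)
The plan is to apply Theorem~\ref{thm:kl-double} as an obstruction for each of the seven knots in the list, taking the branched double cover $\Sigma_2(K)$ (so $q=2$) and choosing a suitable odd prime $p$ so that there exist nontrivial homomorphisms $\rho\co H_1(\Sigma_2(K))\to\Z_p$. The strategy is to show that no splitting $H_1(\Sigma_2(K))\cong H_1\oplus H_2$ with $H_1\cong H_2$ can satisfy condition~(3): that is, for every candidate splitting, we will produce a homomorphism $\rho$ vanishing on $H_1$ (or on $H_2$) for which the twisted Alexander polynomial $\Delta_{K,\rho}(t)$ does \emph{not} factor as $f(t)\overline{f(t^{-1})}$. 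Since each of these knots already passed Proposition~\ref{prop:hom} and the signature test, we know $H_1(\Sigma_2(K))\cong G\oplus G$ for some finite abelian group $G$, so the splittings we must rule out are genuinely constrained; this is what makes the computation feasible.

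The concrete steps would be as follows. First, for each knot $K$, extract a Seifert matrix $A_K$ from KnotInfo~\cite{cha-liv:knotinfo} and compute $H_1(\Sigma_2(K))$ together with the action of the deck transformation (which on $H_1$ is multiplication by $-1$, so every subgroup is automatically deck-invariant when $q=2$; this makes condition~(2) vacuous). Second, enumerate the subgroups $H_1\subset H_1(\Sigma_2(K))$ with $|H_1|^2=|H_1(\Sigma_2(K))|$ that admit a complementary summand $H_2\cong H_1$; equivalently, since $H_1(\Sigma_2(K))\cong G\oplus G$, enumerate the ``Lagrangian-type'' summands up to the symmetry of the linking form. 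Third, for each prime $p$ dividing $|G|$ and each such candidate $H_1$, list the homomorphisms $\rho\co H_1(\Sigma_2(K))\to\Z_p$ with $\rho(H_1)=0$, compute the twisted Alexander polynomial $\Delta_{K,\rho}(t)\in\Gamma_p$ using the algorithm of~\cite{kirk-liv:twisted}, and test whether it has the Hermitian factorization $f(t)\overline{f(t^{-1})}$ — a check that can be carried out by examining the Newton polygon, the leading coefficient (which must be a norm), and the multiplicity structure of the roots. Finally, if for every candidate splitting at least one $\rho$ fails the factorization, Theorem~\ref{thm:kl-double} shows $K$ is not topologically doubly slice; the remark that these knots ``might be algebraically doubly slice'' just records that none of the algebraic obstructions of Section~\ref{section:algebraic} applied, which was already established.

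I expect the main obstacle to be organizational and computational rather than conceptual: one must be careful that the twisted Alexander polynomial is only well-defined up to multiplication by a unit $\pm\zeta_p^j t^k$ in $\Gamma_p$, so the factorization test must be invariant under that ambiguity (in practice one normalizes and checks the condition on the associated ideal or on a symmetrized representative). A second delicate point is making sure the list of candidate splittings is exhaustive — it is tempting to check only the ``obvious'' summands of $G\oplus G$, but the linking form on $H_1(\Sigma_2(K))$ may admit extra metabolizers, and missing one would leave a gap; the safeguard is to enumerate all order-$|G|$ subgroups and discard those that are not self-annihilating or not isomorphic to a complement. A third, more subtle issue is the choice of $p$: for some knots a small prime may give twisted polynomials that happen to factor for spurious reasons, and one must pass to a larger prime dividing the order of the relevant torsion to detect the obstruction; we would try the primes dividing $|H_1(\Sigma_2(K))|$ in increasing order until one works. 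Since these are finite computations over the knots in the displayed list, the proof ultimately reduces to tabulating the outcomes, and we would present the results knot-by-knot, giving for each the prime $p$ used and a representative $\rho$ with its non-factoring twisted polynomial.
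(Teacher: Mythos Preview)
Your overall plan—apply Theorem~\ref{thm:kl-double} and show that for every deck-invariant splitting some character has a twisted Alexander polynomial that fails to be a norm—is exactly the mechanism the paper uses.  The substantive divergence is your choice of cover: you propose working with the \emph{double} branched cover $\Sigma_2(K)$, whereas the paper works with the \emph{triple} branched cover $\Sigma_3(K)$ throughout (and, for $12_{n0210}$, simply cites~\cite{herald-kirk-liv:twisted} that the knot is not even topologically slice).

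This difference is not cosmetic.  As you yourself observe, for $q=2$ the deck transformation acts by $-1$ and condition~(2) is vacuous, so \emph{every} direct-sum decomposition $H_1(\Sigma_2(K))\cong H_1\oplus H_2$ with $H_1\cong H_2$ is a candidate you must rule out.  By contrast, for $q=3$ the order-$3$ deck transformation has distinct eigenvalues on $H_1(\Sigma_3(K);\Z_p)$ (for the primes $p$ arising here), and the invariance requirement forces the splitting to be the eigenspace decomposition $E_\lambda\oplus E_{\lambda^2}$.  Thus the paper has exactly one splitting to test, and a single non-factoring twisted polynomial (tabulated in Table~1) finishes each knot.  Your route trades away this rigidity: you would need to compute twisted polynomials for all characters of $H_1(\Sigma_2(K))$ and show that at most one kernel supports a norm factorization.

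That is a coherent strategy, but it is only a \emph{plan}: whether the $\Sigma_2$ twisted polynomials actually obstruct these particular seven knots is an empirical question you have not settled, and there is no a priori reason the double cover must carry enough information when the triple cover does.  So as written your proposal has a genuine gap—not in the logic, but in the choice of $q$, which may simply fail to produce the needed non-factorizations.  If you want to salvage the approach, either carry out the $\Sigma_2$ computations explicitly, or switch to $\Sigma_3$ and exploit the eigenspace decomposition as the paper does.
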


\begin{proof}

The proof is nearly identical in each case, so we describe only one case in detail.

Let $K=10_{153}$.  Then $H_1(\Sigma_3(K))\cong(\Z_7)^2$, and the action of the deck transformation splits the homology as $E_2\oplus E_4$. Here $E_2$ is the $2$--eigenspace of the action of the deck transformation on $  H_1(\Sigma_3(K))$ and $E_4$ is the $4$--eigenspace.  Notice that $2^3 = 4^3 = 1 \mod 7$.

  Let $\rho_2:(\Z_7)^2\to E_2$ denote projection onto $E_2$, so $\rho_2\vert_{E_4}\equiv 0$, and let $\Delta_{K,\rho_2}(t)$ denote the associated twisted Alexander polynomial.  Then, we have
$$ \Delta_{K,\rho_2}(t)=(-t^2+\omega t+1)(-t^2+\omega t+1),$$
where $\omega=\zeta^4+\zeta^2+\zeta+1$ for a $7^\text{th}$--root of unity $\zeta$.  One easily checks that $\omega=\overline\omega$, so $\Delta_{K,\rho_2}(t) = f(t)\overline{f(t)}$.

On the other hand, if one considers the other projection $\rho_4:H_1(\Sigma_3(K))\to E_4$, so that $\rho_4\vert_{E_2}\equiv 0$, one finds that the associated twisted polynomial is given by
$$\Delta_{K,\rho_4}(t)=t^4+3t^2+1.$$

The following lemma states that  $t^4+3t^2+1$ is irreducible in $\Gamma_7$.  It follows from  Theorem~\ref{thm:kl-double}  that $K$ cannot be topologically doubly slice,  since the twisted polynomials associated to this metabolizing representation do  not factor as   norms. 

\begin{lemma}
The polynomial $p(t)=t^4+3t^2+1$ is irreducible in $\Gamma_7$.
\end{lemma}

\begin{proof} If $\alpha \in  \mathbb{Q}(\zeta_7)$ is a root of $p(t)$, then so is $\alpha^{-1}$.  Thus, if $p(t)$ has a linear factor, it has two distinct linear factors, and hence it has a quadratic factor.  So, suppose that $p(t)$ factors into two quadratic polynomials.  One can assume the factorization is of the form $$p(t) = (t^2 + at + b)(t^2 + a't +b').$$  By examining coefficients, the factorization further simplifies to be of the form
$$p(t) = (t^2 + at + b)(t^2- at +b),$$  where   
$b= \pm 1$ and $a^2 = 2b-3$. 
If $b = 1$, then $a^2 = -1$.    If $b = -1$, then $a^2 = -5$.  Thus, the proof is  completed by showing that ${\mathbb{Q}(\zeta_7)}$ contains neither $\sqrt{-1} $ nor $\sqrt{-5}$.

The Galois group of ${\mathbb{Q}(\zeta_p)}$ is cyclic, isomorphic to $ {\mathbb{Z}}_{p-1}$, and thus contains a unique index two subgroup.  If follows that  ${\mathbb{Q}(\zeta_p)}$ contains a unique quadratic extension of $ {\mathbb{Q}}$.  A standard result in number theory (see~\cite{marcus}) states that this field is ${\mathbb{Q}}(\sqrt{  p})$ or ${\mathbb{Q}}(\sqrt{  -p})$, depending on whether $p$ is congruent to 1 or 3 modulo 4, respectively.  This quickly yields the desired contradiction; for instance, it is clear that  $  {\mathbb{Q}}(\sqrt{  -5}) \not \subseteq {\mathbb{Q}}(\sqrt{  -7})$.
\end{proof}

It follows from  Theorem~\ref{thm:kl-double}  that $K$ cannot be topologically doubly slice,  since the twisted polynomials associated to this metabolizing representation does not factor as a norm.

The general  proof of  Theorem~\ref{thm:NotTop} proceeds by checking that each of the relevant twisted Alexander polynomials does not factor as a norm.   The pertinent information needed to verify the result for the other knots is described in Table~\ref{table:Twisted}.  The Maple program developed in conjunction with~\cite{herald-kirk-liv:twisted} was used to find the twisted polynomials and Maple could also be used to check  the factoring conditions.  

The knot $12_{n0210}$ was shown to not to be topologically slice in~\cite{herald-kirk-liv:twisted} using twisted polynomials, and hence it is not topologically doubly slice.

\end{proof}

%%%%%%%%%%%%%%%%%%%%%%%%%%%%%%%%  TABLE %%%%%%%%%%%%%%%%%%%%%%%%%%%%%%%%%%
\begin{table}[h!]\label{table:Twisted}
\renewcommand{\arraystretch}{2.5}
\tiny
\begin{tabular}{|c|c|c|}
\hline
\textbf{Knot} & \textbf{Cover Homology} &  \textbf{Irreducible Twisted Polynomial} \\
\hline
$10_{153}$ & $\Sigma_3(K)\cong(\Z_7)^2$ & $\Delta_{K,\rho_4}(t)=t^4+3t^2+1$  \\
\hline
$12_{n0019}$ & $\Sigma_3(K)\cong(\Z_{13})^2$ & $\Delta_{K,\rho_9}(t)  =  t^4 
  +2t^2+1 $ \\
 & $\zeta^{13}=1$ & $+t^3\left(\zeta^{11}+\zeta^9+\zeta^8+\zeta^7+2\zeta^6+2\zeta^5+\zeta^3+2\zeta^2+\zeta+1\right)$ \\
  && $+t\left(\zeta^{11}-\zeta^9+\zeta^8+\zeta^7-\zeta^3-\zeta\right)$ \\
\hline
$12_{n0214}$ & $\Sigma_3(K)\cong(\Z_7)^2$ & $\Delta_{K,\rho_2}(t)= -29 t^4 +\left(31+ 8 \zeta + 8 \zeta^2 + 8 \zeta^4\right) $ \\ 
& $\zeta^7=1$ & $+ t^3 \left(-27 + 37 \zeta + 37 \zeta^2 + 37 \zeta^4\right)$ \\
&& $ + t \left(48 + 47 \zeta + 47 \zeta^2 + 47 \zeta^4\right) $ \\
&& $+ t^2 \left(17 + 68 \zeta + 68 \zeta^2 + 68 \zeta^4\right)$ \\
\hline
$12_{n0257}$ & $\Sigma_3(K)\cong(\Z_{13})^2$ & $\Delta_{K,\rho_9}(t)=- 13 t^4 +13 $\\
& $\zeta^{13}=1$ & $+ t^3 \left(37 + 48 \zeta + 21 \zeta^2 + 48 \zeta^3 + 21 \zeta^5 + 21 \zeta^6 + 14 \zeta^7 + 
    14 \zeta^8 + 48 \zeta^9 + 14 \zeta^{11}\right)$
    \\
&& $+ t^2 \left(39 + 78 \zeta + 13 \zeta^2 + 78 \zeta^3 + 13 \zeta^5 + 13 \zeta^6 + 65 \zeta^7 + 
    65 \zeta^8 + 78 \zeta^9 + 65 \zeta^{11}\right)$
     \\
&& $ +t \left(11 + 48 \zeta + 34 \zeta^2 + 48 \zeta^3 + 34 \zeta^5 + 34 \zeta^6 + 27 \zeta^7 + 27 \zeta^8 + 
    48 \zeta^9 + 27 \zeta^{11}\right) $ \\
\hline
$12_{n0318}$ & $\Sigma_3(K)\cong(\Z_7)^2$ & $\Delta_{K,\rho_2}(t) = 1 + 3 t^2 + t^4$ \\
& $\zeta^7=1$ & $ + t \left(3 - \zeta - \zeta^2 - \zeta^4\right) $ \\
&& $+ t^3 \left(4 + \zeta + \zeta^2 + \zeta^4\right)$ \\
\hline
$12_{n0440}$ & $\Sigma_3(K)$ & $\Delta_{K,\rho_2}(t) = t^4-3t^3+6t^2-3t+1$ \\
& $\cong(\Z_2)^4\oplus(\Z_7)^2$ &  \\
\hline
\end{tabular}\vskip.1in
\caption{Twisted Alexander polynomial calculations.}
\end{table}
%%%%%%%%%%%%%%%%%%%%%%%%%%%%%%%%%%%%%%%%%%%%%%%%%%%%%%%%%%%%%%%%%%%%%%%%%

%%%%%%%%%%%%%%%%%%%%%%%%%%%%%%%%%%%%%%%%%%%%%%%%%%%%%%%%
%%%%%%%%%%%%%%%%%%%%%%%%%%%%%%%%%%%%%%%%%%%%%%%%%%%%%%%%
\section{Double slicing knots}\label{section:slicing}
%%%%%%%%%%%%%%%%%%%%%%%%%%%%%%%%%%%%%%%%%%%%%%%%%%%%%%%%
%%%%%%%%%%%%%%%%%%%%%%%%%%%%%%%%%%%%%%%%%%%%%%%%%%%%%%%%

In this section, we discuss some techniques that can be used to show that a knot is doubly slice. We will address the issue of double sliceness in both the smooth and the locally flat settings.  

%%%%%%%%%%%%%%%%%%%%%%%%%%%%%%%%%%%%%%%%%%%%%%%%%%%%%%%%
\subsection{Band systems}\label{subsec:bands}\ 
%%%%%%%%%%%%%%%%%%%%%%%%%%%%%%%%%%%%%%%%%%%%%%%%%%%%%%%%

In~\cite{donald:embedding}, Donald showed that if a knot can be sliced by two different sequences of band moves, and if the bands are related in a certain way, then combining the two ribbon disks yields an unknotted 2--sphere.  In this section we present a concise treatment of this result.

Let $L$ be a link in $S^3$ and let $b$ be the image of a 2--disk embedded in $S^3$ such that $L  \cap b$ consists of two disjoint arcs in $\partial b$.  We refer to such a $b$ as a {\it band}  and denote by $L*b$ the link formed as the closure of   $(L \cup  \partial b) \setminus (L\cap b)$.  Notice that $(L*b)*b = L$;  also, if $b$ and $c$ are disjoint, then $(L*b)*c = (L*c)*b$, so we can write both as $L*b*c$.

The reader should be familiar with the fact that the \emph{band move} $L \to L*b$ yields a cobordism from $L$ to $L*b$ in $S^3 \times [0,1]$.  A sequence of $n$ such cobordisms from a knot $K$ to the  unlink of $n + 1$ components   yields a ribbon disk  in  $B^4$  formed as the union of the cobordism and disjoint disks bounded by the unlink.  Two such sequences yield an embedded sphere formed as the union  of the ribbon disks   in $S^4 = B^4 \cup B^4$.  If the sequences arise from single bands $b$ and $c$, we denote the knotted 2--sphere $(K, b, c)$. We have the following reinterpretations of two special cases of Donald's double slicing criterion~\cite{donald:embedding}.
 
\begin{theorem}\label{thm:don1}
If $K$ is a knot and  $b$ and $c$ are disjoint bands for which $K*b $ is an unlink, $K*c$ is an unlink, and $K*b*c$ is an unknot, then $ (K,  b , c )$ is unknotted.
\end{theorem}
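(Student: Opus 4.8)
The plan is to show that the $2$--sphere $(K,b,c)$ is unknotted by producing an explicit isotopy of $S^4$ (or, equivalently, a sequence of elementary manipulations of the band data) that carries it to the standard unknotted sphere. The guiding principle is that unknottedness of a sphere built from two ribbon disks can be detected at the level of the middle $3$--sphere together with the two band systems, so I want to reduce the hypothesis ``$K*b$, $K*c$ unlinks and $K*b*c$ unknot'' to a normal form from which unknottedness is manifest.

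First I would set up the two ribbon disks carefully. The band $b$ gives a cobordism in $S^3\times[0,\tfrac12]$ from $K$ (at level $\tfrac12$) down to the unlink $K*b$ (at level $0$), capped by trivial disks to form a ribbon disk $D_b\subset B^4_-$; similarly $c$ gives a ribbon disk $D_c\subset B^4_+$ using $S^3\times[\tfrac12,1]$, and $\Sigma=D_b\cup_K D_c$ is the sphere $(K,b,c)$ in $S^4=B^4_-\cup_{S^3} B^4_+$. The key structural observation I would invoke is that, since $b$ and $c$ are \emph{disjoint}, the second band move can be performed on either side of the decomposition: inside $B^4_+$ we may push the band $c$ down through the level $S^3=S^3\times\{\tfrac12\}$ past $K$ and apply it to $K*b$ instead, because disjointness gives $(K*b)*c=(K*c)*b=K*b*c$. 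This lets me re-slice: rather than viewing $\Sigma$ as $D_b$ glued to $D_c$ along $K$, I view it as built from the three links $K*b$ (an unlink), $K*b*c$ (an unknot), and the single band $c$ connecting them, together with the mirror-image picture on the $b$ side. Concretely, I want to isotope $\Sigma$ so that a neighborhood of one of its ``poles'' is standard: the unknotted $K*b*c$, capped on one side by the obvious disk in a ball.

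The main step is then a cancellation argument. Having arranged that $K*b*c$ is an unknot $U$ bounding a flat disk $\Delta$, the portion of $\Sigma$ on the $c$-side of $U$ consists of $\Delta$ together with the band $c$ and the trivial disks of the unlink $K*b$; I claim this whole piece is an unknotted disk in $B^4_+$ (relative to its boundary $K$), because it is a ribbon disk for $K$ with one band, and a one-band ribbon disk from a knot to a split union that becomes an unknot after the band is, up to isotopy of the band, standard — here I would either cite the classification of ribbon disks with a single band or give the short direct argument that a single band joining the unknot to an unlink, with the result an unlink, can be isotoped to a trivial band. The symmetric statement handles the $b$-side. So $\Sigma$ is the union of two unknotted disks in the two balls sharing the unknotted boundary $K$, and the final move is to observe that two such disks glue to an unknotted sphere: push both disks to be standard collared half-disks meeting the equatorial $S^3$ in $K$, then cancel.

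The hardest part — and the place where I expect Donald's real content to live — is the middle step: proving that the one-band ribbon disk on each side is genuinely \emph{unknotted} rel boundary, not merely a smooth disk. Disjointness of $b$ and $c$ is exactly what is needed here, since it is what allows the band $c$ to be isotoped to an unknotting band for the unlink $K*b$ while staying away from $b$; without it the two sides would interfere and the disks could be knotted. I would therefore spend the bulk of the argument making precise how ``$K*b*c$ is an unknot'' forces the band $c$, acting on the unlink $K*b$, to be isotopic to a trivial band, and dually for $b$. Once that local normal-form statement is in hand, the gluing and cancellation at the end are routine.
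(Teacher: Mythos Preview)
Your overall strategy matches the paper's: use disjointness of $b$ and $c$ to reorder the band movie so that the sphere $(K,b,c)$ is presented as $K*b \to K*b*c \to K*c$, i.e., as two one-band ribbon disks for the \emph{unknot} $U_1 = K*b*c$ glued along their common boundary, and then argue that each of these disks is standard. That is exactly what the paper does. (One small confusion in your third paragraph: once you re-slice along $U_1$, the boundary of each piece is $U_1$, not $K$; the object ``$\Delta$ together with the band $c$ and the trivial disks of $K*b$'' is already the whole half-sphere, a disk with boundary $U_1$, not a disk with boundary $K$.)

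The genuine gap is the step you correctly flag as hardest and then propose to handle by a ``short direct argument'' or a ``classification of ribbon disks with a single band.'' No such short argument exists. The assertion that a ribbon disk for the unknot with two minima and one saddle is the standard disk --- equivalently, that a band attached to a two-component unlink which produces the unknot is isotopic to the trivial band --- is precisely Scharlemann's theorem, and the paper simply cites it. This is a deep result (Scharlemann's band-sum theorem / ``smooth spheres in $\mathbb{R}^4$ with four critical points are standard''), and you will not be able to produce it by manipulating the band $c$ on $K*b$ using only the hypothesis that $K*b*c$ is unknotted. So your outline is right, but the ``bulk of the argument'' you plan to write should instead be replaced by a citation to Scharlemann; the paper even remarks afterward that for the specific low-crossing examples one can bypass Scharlemann by checking triviality of the bands by hand, but for the theorem as stated the full strength of Scharlemann is needed.
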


\begin{proof}
Write $U_2 = K*b$ and $U_2' = K*c$.  Both are unlinks.  Write  $U_1= K*b*c$, which is an unknot.     The surface $(K,b,c)$ corresponds to the sequence $$U_2 \to U_2*b = K \to K*c = U_2'.$$  Changing the order of the bands, this can be rewritten as $$U_2 \to U_2 *c \to U_2 *c *b.$$  Since $U_2  = K*b$, we can express this as 
$$U_2  \to K*b *c \to K*b *c *b.$$  Using the facts that $K*b*c = U_1$  and $K*b*c*b = K*b*b*c = K*c$, we finally rewrite the sequence as $U_2 \to U_1 \to U_2'$.  
 
According to Scharlemann~\cite{scharlemann}, a ribbon disk for the unknot with two minima is trivial.  Thus, $(S,b,c)$ is the union of two trivial disks; hence it is the unknot. 
\end{proof}
 
By iterating this approach, one can easily prove results such as the following.
 
\begin{theorem}\label{thm:don2}
Let  $K$ be a knot with disjoint bands $a$, $b$, $c$, and $d$ and suppose that $K*a*b$ and $K*c*d$ are three component unlinks. In this case, there this an associated knotted sphere, $K(a,b,c,d)$.  If $K*a*b*c$ and $K*a*c*d$ are unlinks of two components and $K*a*b*c*d$ is an unknot, then $K(a,b,c,d)$ is unknotted.
\end{theorem}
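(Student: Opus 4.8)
The plan is to mimic the proof of Theorem~\ref{thm:don1}, but now juggling four disjoint bands rather than two. The surface $K(a,b,c,d)$ is built from two ribbon disks for $K$: one coming from the sequence that uses $a$ and $b$ to reach the three-component unlink $K*a*b$, and one coming from the sequence that uses $c$ and $d$ to reach $K*c*d$. As in the two-band case, gluing these two ribbon disks along $K$ across $S^4=B^4\cup B^4$ gives the sphere $K(a,b,c,d)$, and the point is to re-describe this sphere as a union of trivial disks by reshuffling the band operations.

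First I would record the combinatorial identities for band operations that do the work: since all four bands are disjoint, band operations commute, $(L*b)*b=L$, and every move $L\to L*b$ is realized by a cobordism in $S^3\times[0,1]$. Starting from the presentation of $K(a,b,c,d)$ as the sequence of band moves from the unlink $K*a*b$ up to $K$ (inserting $a$ then $b$, reading one ribbon disk backwards) followed by the moves from $K$ down to $K*c*d$ (inserting $c$ then $d$), I would reorder the bands exactly as in the proof of Theorem~\ref{thm:don1}: slide the $c$ and $d$ insertions past the $a,b$ insertions, then use $K*a*b*b = K*a$ and similar cancellations to cut down the number of minima. The hypotheses that $K*a*b*c$ and $K*a*c*d$ are two-component unlinks and that $K*a*b*c*d$ is an unknot are precisely what is needed so that, after the reshuffling, the sphere is exhibited as a sequence passing through these simpler unlinks — ultimately through the single unknot $K*a*b*c*d$ — with only two minima. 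Then Scharlemann's theorem~\cite{scharlemann} that a two-minimum ribbon disk for the unknot is trivial applies to each half, so $K(a,b,c,d)$ is the union of two trivial disks and hence unknotted. (Alternatively, one can phrase the induction so that Theorem~\ref{thm:don1} is literally invoked on an auxiliary knot obtained after performing the bands $a$ and $c$.)

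The main obstacle is purely bookkeeping: with four bands there are several orderings to track, and one must verify that the intermediate links appearing after the reshuffle are exactly the ones hypothesized to be unlinks (e.g., that the correct cancellations $K*a*b*c*d*b = K*a*c*d$ occur so that $K*a*c*d$ enters the sequence, matching the hypothesis), rather than some other link about which nothing is assumed. Once the reordering is set up so that each of the two ribbon disks in the final description bounds the unknot with two minima, the conclusion is immediate from Scharlemann's result, exactly as in Theorem~\ref{thm:don1}. I expect no genuinely new geometric input beyond that theorem and the commuting-bands formalism; the proof is "an easy iteration," as the statement advertises.
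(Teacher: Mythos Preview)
Your reshuffling is right and matches the paper's intent (the paper gives no proof beyond ``by iterating this approach''): reordering the bands so that the sphere is described by the sequence $K*a*b \to K*a*b*c \to K*a*b*c*d \to K*a*c*d \to K*c*d$ is exactly the move, and your sample cancellation $K*a*b*c*d*b = K*a*c*d$ shows you have the correct order in mind.

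The gap is in the endgame. After splitting at the unknot $K*a*b*c*d$, each half is a ribbon disk for the unknot with \emph{three} minima, not two: each side is capped by the three disks bounding a three-component unlink ($K*a*b$ or $K*c*d$) and carries two saddles. The identities like $K*a*b*b = K*a$ only rename intermediate links; they do not cancel critical points. So the two-minimum form of Scharlemann's theorem from Theorem~\ref{thm:don1} does not apply directly, and the phrase ``with only two minima'' is where the argument breaks. The fix---and this is the ``iteration'' the paper is pointing to---is to apply Scharlemann twice: the unknot $K*a*b*c*d$ is a band sum along $d$ of the two-component link $K*a*b*c$, so Scharlemann forces that link to be an unlink and the band $d$ to be trivial; this lets you cancel the $d$--saddle against the single minimum sitting under one component of $K*a*b*c$, leaving a two-minimum ribbon disk for the unknot, to which Scharlemann now applies. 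The other half is handled the same way using the band $b$. (Your parenthetical alternative of literally invoking Theorem~\ref{thm:don1} on $K*a*c$ does not go through as stated: nothing in the hypotheses controls $K*a*c$.)
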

  
Note that Scharlemann's theorem is used above to show that certain slicing disks for the unknot are trivial.  In each of   the examples we consider, one can quickly show that the relevant slice  disks for the unknot are trivial  by observing that they are built using   trivial band  sums of the unlink;  in particular, for our examples, one need not   use the depth of Scharlemann's theorem.

%%%%%%%%%%%%%%%%%%%%%%%%%%%%%%%%%%%%%%%%%%%%%%%%%%%%%%%%
\subsection{Superslice knots}\label{subsec:superslice}\ 
%%%%%%%%%%%%%%%%%%%%%%%%%%%%%%%%%%%%%%%%%%%%%%%%%%%%%%%%

A knot $K$ is called \emph{superslice} if there is a slice disk $D$ for $K$ such that the double of $D$ along $K$ is an unknotted 2--sphere in $S^4$.

Suppose that $K$ is obtained by attaching a band $\ups$ to an unlink of two components.  See Figure~\ref{fig:SuperRibbon12} for the pertinent three examples.  Let $D_1$ and $D_2$ denote the standard pair of disks bounded by the two-component unlink. In this case, the union  $D=D_1\cup\ups\cup D_2$ is an obvious ribbon disk for $K$.  This disk is immersed in $S^3$ with ribbon singularities, but if we push the interiors of $D_1$ and $D_2$ into $B^4$, we obtain an embedded disk, still called $D$,  with two minima and one saddle  with respect to the standard radial Morse function.  We can assume that $D$ is properly embedded by pushing the entire interior into $B^4$, but pushing the interiors of $D_1$ and $D_2$ in farther.

Let $\K$ be the 2--knot obtained by doubling the disk $D$.  That is, glue two copies of $(B^4,D)$ together along their common $(S^3, K)$ boundary (via the identity map) to get $(S^4, \K)$.  By construction, we see that $\K$ is formed by taking two unknotted 2--spheres $S_1$ and $S_2$ in $S^4$ and attaching a tube $\Upsilon$ that connects them.  Here, $S_i$ is the double of $D_i$ and $\Upsilon$ is the double of $\ups$.

\begin{figure}[h!]
\centering
\includegraphics[scale = .4]{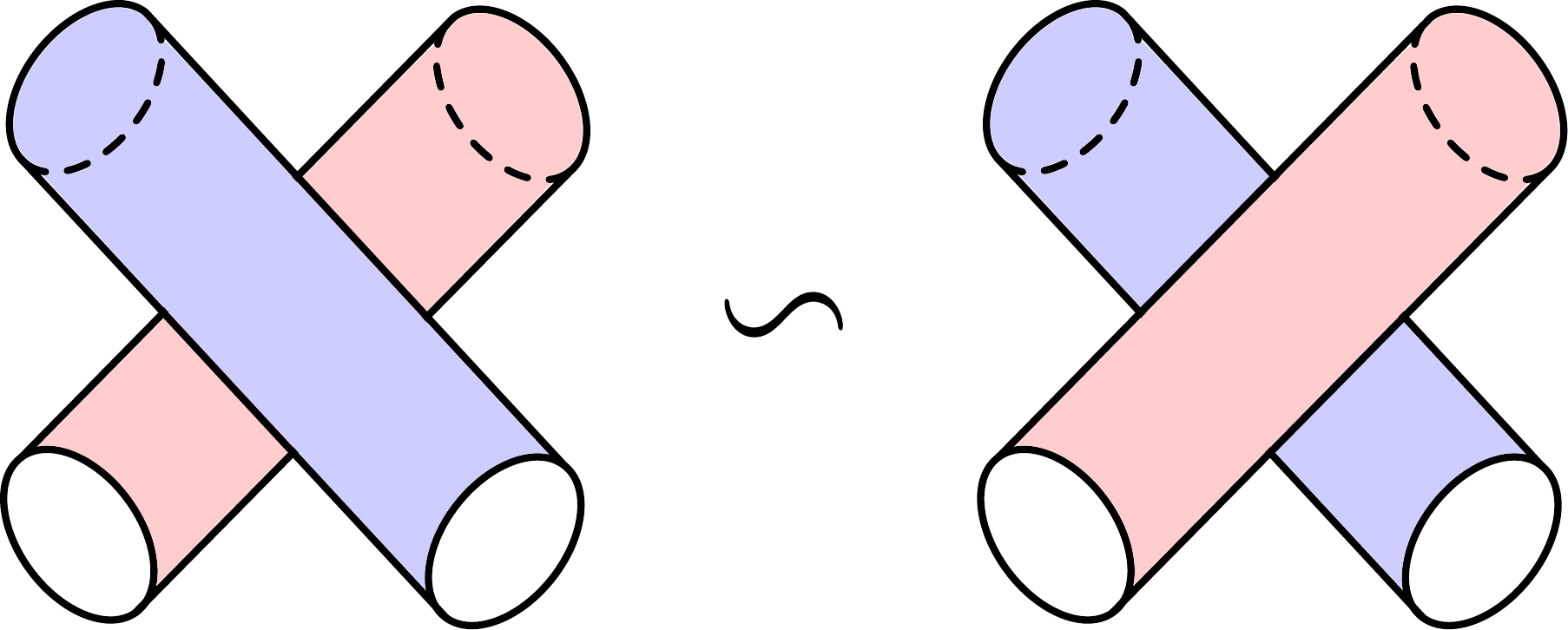}
\caption{A local picture of a 2--knot isotopy that passes one tube through another.}
\label{fig:TubePass}
\end{figure}

Suppose that locally, we see two pieces of $\Upsilon$ as in Figure~\ref{fig:TubePass}; there is an isotopy that passes these two pieces through each other, as shown.  This isotopy corresponds to passing pieces of $\upsilon$ past each other.  This changes the isotopy class of the band $\ups$, giving a new band $\ups'$  and a new ribbon knot $K'$, which is obtained by attaching $\ups'$ to the original unlink.  Because this change resulted from an isotopy of $\K$, we see that both $K$ and $K'$ are cross-sections of $\K$.  If $K'$ is unknotted, then $\K$ is unknotted, as in the proof of Theorem~\ref{thm:don1}, and we can conclude that $K$ is doubly slice.  We summarize this with the following criterion.

\begin{proposition}\label{prop:super}
	Let $K$ be a knot that is obtained by attaching a single band $\ups$ to an unlink of two components.  Let $K'$ be the result of passing the band $\ups$ through itself as discussed above.  If $K'$ is the unknot, then $K$ is smoothly superslice.  In particular, if the band is relatively homotopic to a trivial band in the complement of a neighborhood of the unlink, then $K$ is smoothly superslice.
\end{proposition}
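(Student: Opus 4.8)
The plan is to make precise the informal picture described just above: the doubled disk $\K = S_1 \cup \Upsilon \cup S_2$ is a "trivial sphere with a tube," and the key point is that the isotopy class of $\K$ depends only on the isotopy class of the band $\ups$ relative to the two-component unlink, through the correspondence band $\leftrightarrow$ tube. I would first fix the construction of $\K$ carefully as in the text: push the interiors of $D_1, D_2$ into $B^4$ (with $D_i$ interiors pushed in deeper than $\ups$), double $(B^4, D)$ across $(S^3, K)$ to obtain $(S^4, \K)$, and observe that $\K = S_1 \cup \Upsilon \cup S_2$ where $S_i$ is the double of $D_i$ (an unknotted $2$-sphere, being the double of a trivial disk) and $\Upsilon = $ double of $\ups$ is a tube joining them.

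Second, I would establish the isotopy extension lemma that is doing the real work: if $\ups$ and $\ups'$ are bands attached to the two-component unlink $U$ that are homotopic rel endpoints in the complement of a tubular neighborhood of $U$, then the corresponding doubled $2$-knots $\K$ and $\K'$ are ambient isotopic in $S^4$. The mechanism is exactly Figure~\ref{fig:TubePass}: a generic homotopy of bands in $S^3$ is a sequence of isotopies together with finitely many "finger moves" where one strand of $\ups$ passes through another; each such crossing change of the band corresponds, after doubling, to passing one piece of the tube $\Upsilon$ through another piece of $\Upsilon$, which is realized by an honest ambient isotopy of $\K$ in the extra dimension available in $S^4$ (the local model is precisely the tube-through-tube isotopy pictured). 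Each cross-section of the isotopy is the double of a band, so each intermediate $2$-knot has an equatorial cross-section which is the ribbon knot obtained by attaching the current band to $U$.

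Third, I would conclude. Given the hypothesis that $\ups$ is homotopic rel endpoints to a trivial band in the complement of a neighborhood of $U$, the lemma gives that $\K$ is isotopic to the double $\K'$ of the disk built from that trivial band; but attaching a trivial band to a two-component unlink yields the unknot $K'$, and doubling the trivial disk it bounds gives the standard unknotted $2$-sphere. Hence $\K$ is unknotted, so $K$ — which is an equatorial cross-section of $\K$ by construction — is doubly slice; since $\K$ is the double of the explicit ribbon disk $D$, $K$ is in fact superslice, and everything is smooth. The first ("in particular") sentence of the proposition is the special case where $K' = $ unknot is obtained directly; the hypothesis on homotopy to a trivial band is the clean sufficient condition.

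**Main obstacle.** The genuinely delicate point is the second step: verifying that a crossing change of the band $\ups$ really is induced by an ambient isotopy of the doubled $2$-knot $\K$ in $S^4$, i.e. that "passing one strand of a band through another" lifts under doubling to "passing one sheet of a tube through another," and that the latter is an isotopy rather than merely a regular homotopy of the $2$-sphere. This is where the extra dimension of $S^4$ over $B^4$ is essential, and where one must be careful that the doubling is symmetric enough that the local model of Figure~\ref{fig:TubePass} applies and that no new intersections are introduced elsewhere. I would handle it by the explicit local model together with the observation that outside a small ball the construction is unchanged, so the isotopy is supported in that ball — but pinning down the local model rigorously (rather than by the suggestive picture) is the part that needs care.
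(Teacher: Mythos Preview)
Your argument tracks the paper's approach closely: construct $\K$ as the double of the ribbon disk, use the tube-passing isotopy (the paper's Figure~\ref{fig:TubePass}) to show that band-crossings of $\ups$ induce ambient isotopies of $\K$, and conclude that $\K$ is unknotted once the band has been made trivial. For the ``in particular'' clause this is exactly right, and in fact your route---carrying the homotopy all the way to the \emph{trivial} band, so that the doubled disk is visibly the standard sphere---is slightly cleaner than what the paper does, since it avoids any appeal to Scharlemann.

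However, you have the logical relationship between the two sentences reversed, and this leaves a gap in your treatment of the first sentence. The proposition asserts: (1) if after band-passing $K'$ is the unknot, then $K$ is superslice; and (2) \emph{in particular}, if $\ups$ is homotopic to a trivial band, then $K$ is superslice. Statement~(2) follows from~(1), because a trivial band produces the unknot. You prove~(2) directly and then call~(1) a ``special case,'' but it is not: after band-passing you may arrive at $K' = $ unknot via a band $\ups'$ that is \emph{not} the trivial band and is not known to be homotopic to it. At that stage $\K'$ is the double of a ribbon disk for the unknot having two minima and one saddle, and you still owe an argument that this double is the unknotted $2$--sphere. The paper closes this gap by invoking Scharlemann's theorem (a ribbon disk for the unknot with two minima is trivial), exactly as in its proof of Theorem~\ref{thm:don1}. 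Without that step, or an equivalent, your proof of the first sentence is incomplete.
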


Figure~\ref{fig:SuperRibbon12} shows three examples of ribbon knots that satisfy the above criterion and can therefore be seen to be smoothly superslice.

\begin{corollary}
	The isotopy class of $\K$ depends only on the homotopy class of the core of $\ups$.
\end{corollary}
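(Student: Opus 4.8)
The plan is to prove the equivalent statement that if $\ups$ and $\ups'$ are bands attached to the two--component unlink $L$ whose cores $\g$ and $\g'$ are homotopic rel endpoints in $S^3\setminus\nu(L)$, then the doubled $2$--spheres $\K$ and $\K'$ are ambiently isotopic in $S^4$. Throughout I would use the description $\K = S_1\cup\Upsilon\cup S_2$ from the construction above, in which $S_1$ and $S_2$ are a fixed pair of disjoint unknotted $2$--spheres in $S^4$ and $\Upsilon$ is the tube obtained by doubling the band, so that the core circle of $\Upsilon$ is the double of $\g$; it then suffices to isotope $\Upsilon$ to $\Upsilon'$ rel $S_1\cup S_2$.

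First I would reduce a homotopy to elementary moves in the $3$--manifold $M = S^3\setminus\nu(L)$. By general position, a homotopy rel endpoints from $\g$ to $\g'$ factors as a finite sequence of ambient isotopies of the arc together with finitely many self--crossing changes (``finger moves'') in which $\g$ is passed through itself: a generic homotopy $F\colon I\times I\to M$ with $F_0,F_1$ embeddings has its double--point locus $\{(x,y,t):x\ne y,\,F(x,t)=F(y,t)\}$ of dimension $3-3=0$ and no triple points, so the non--embedded levels are isolated and each is an ordinary crossing change. An ambient isotopy of a band neighborhood of $\g$ inside $M$ induces an ambient isotopy of the tube $\Upsilon$ inside $S^4$ rel $S_1\cup S_2$, and hence does not change $\K$ up to isotopy.

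Next I would handle the finger moves. A self--crossing change of $\g$ corresponds to passing one sheet of the band $\ups$ through another, and after doubling this corresponds to passing one piece of $\Upsilon$ through another piece of $\Upsilon$; by the local isotopy of $2$--knots pictured in Figure~\ref{fig:TubePass} (the very move used in the proof of Proposition~\ref{prop:super}), this is realized by an ambient isotopy of $\K$. So the finger moves also leave the isotopy class of $\K$ unchanged, and therefore $\K$ depends only on the homotopy class of $\g$ \emph{together with} the framing used to thicken $\g$ to a band.

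Finally I would kill the framing. Bands with the same core but framings differing by $\pm1$ double to tubes differing by a full twist of the normal framing of the annulus $\Upsilon$ along its core; a full twist of a tube in a $4$--manifold is undone by an ambient isotopy — push the foot $\Upsilon\cap S_1$ into a small disk of $S_1$ and spin the emanating tube once around through the normal circle of directions — so this changes neither $\Upsilon$ nor $\K$ up to isotopy. Combining the three steps gives the corollary. I expect the framing step to be the main obstacle: one must verify carefully that the $\Z$--worth of normal framings of a tube genuinely collapses under ambient isotopy of $S^4$ (equivalently, that twisting a tube in $4$--space is isotopically trivial), and, relatedly, one must make the local picture in Figure~\ref{fig:TubePass} rigorous enough to be sure that a self--crossing change of the core induces a genuine ambient isotopy of $\K$ rather than some coarser modification; with those two points nailed down, the argument above goes through.
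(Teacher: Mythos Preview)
Your proposal is correct and follows essentially the paper's approach: the paper gives no separate proof of this corollary, treating it as an immediate consequence of the tube-passing isotopy of Figure~\ref{fig:TubePass}, which is precisely your step handling the finger moves. You in fact go beyond the paper by explicitly treating the framing of the band via the spinning argument---a point the paper does not address at all---and your instinct that this is the step requiring the most care is well placed.
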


\begin{figure}[h!]
\centering
\includegraphics[scale = .5]{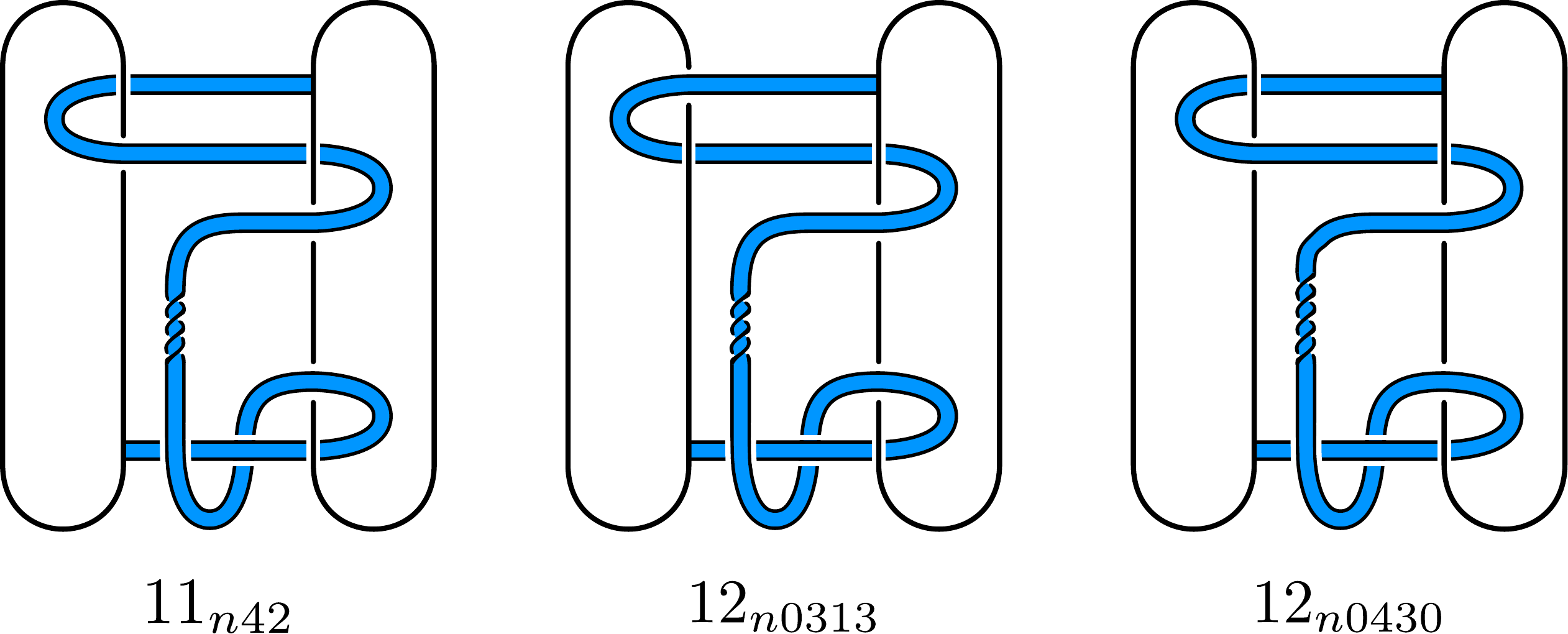}
\caption{The above knots are smoothly superslice.  See Subsection \ref{subsec:superslice}.}
\label{fig:SuperRibbon12}
\end{figure}

%%%%%%%%%%%%%%%%%%%%%%%%%%%%%%%%%%%%%%%%%%%%%%%%%%%%%%%%
\subsection{Freedman and the locally flat setting}\label{subsec:freedman}\ 
%%%%%%%%%%%%%%%%%%%%%%%%%%%%%%%%%%%%%%%%%%%%%%%%%%%%%%%%

Let $K$ be a knot in $S^3$, and let $\Delta_K(t) $ denote the Alexander polynomial of $K$.  It is a well-known consequence of the work of Freedman and Quinn that any knot $K$ with $\Delta_K(t) =1$ bounds a topologically locally 
flat disk in $B^4$~\cite{freedman:concordance, freedman-quinn}.  
In fact, a stronger, yet less well-known, fact is true. (See~\cite{meier:double} for more detail.)  

\begin{theorem}\label{thm:Freedman}
Let $K$ be a knot in $S^3$.  If $\Delta_K=1$, then $K$ is topologically superslice.
\end{theorem}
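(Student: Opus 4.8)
The plan is to promote Freedman's disk-embedding theorem from a statement about slice disks to a statement about superslice disks, by a careful examination of the topological $4$--manifold one obtains from the complement of such a disk. Recall that Freedman--Quinn show that when $\Delta_K(t)=1$, the knot $K$ bounds a locally flat disk $D$ in $B^4$ whose exterior $B^4\setminus\nu(D)$ has the homotopy type of a circle, i.e.\ it is an $\pi_1$--null homology $S^1\times D^3$; this is exactly the condition needed to conclude that the exterior is homeomorphic to $S^1\times D^3$ using the surgery-theoretic classification of such manifolds. The point I want to make is that this exterior being \emph{standard} (homeomorphic to $S^1\times D^3$, not merely homotopy equivalent) is precisely what is needed to run the doubling argument.

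First I would set up the doubling. Form $(S^4,\K)$ by gluing two copies of $(B^4,D)$ along their common $(S^3,K)$ boundary via the identity. Removing an open tubular neighborhood of $\K$ from $S^4$ gives a manifold $X$ which is the double of the disk exterior $E:=B^4\setminus\nu(D)$ along the circle bundle $\partial_0 E$ over $K$ (more precisely, along $S^3\setminus\nu(K)$, a solid-torus-complement that is an $S^1\times D^2$'s worth of the boundary). Since $E\cong S^1\times D^3$, with the gluing region on the boundary being $S^1\times D^2$, the double $X$ is homeomorphic to $S^1\times S^3$ with an open $S^1\times D^3$ carved out — or, assembling it directly, $X\cong S^1\times (D^3\cup_{D^2} D^3)=S^1\times S^2\times[0,1]$? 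I need to be careful here: the correct model is that $X$ is homeomorphic to $S^1\times D^3$ again, the exterior of an unknotted $S^2$. The key step is identifying the gluing: doubling $S^1\times D^3$ along the sub-bundle $S^1\times D^2\subset S^1\times S^2=\partial(S^1\times D^3)$ yields $S^1\times(\text{double of }D^3\text{ along }D^2)$, and the double of $D^3$ along a properly embedded $D^2$ — meaning $D^3\cup_{D^2}D^3$ — is again $D^3$. Hence $X\cong S^1\times D^3$, which is the exterior of the unknotted $2$--sphere in $S^4$.

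Then I would invoke the uniqueness of the unknotted $S^2$: a locally flat $2$--sphere in $S^4$ whose complement is homeomorphic to $S^1\times D^3$ is topologically unknotted. This follows from the topological $4$--dimensional Schoenflies-type results / unknotting theorems in Freedman--Quinn (a locally flat $2$--sphere with exterior $\simeq S^1$ bounds a locally flat $3$--ball on each side, hence is standard). Combining, $\K$ is the unknotted $2$--sphere, and since $\K$ is by construction the double of the disk $D$ along $K$, the knot $K$ is topologically superslice, with $D$ itself (pushed into $B^4$) serving as the superslicing disk.

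The main obstacle I expect is the bookkeeping of the boundary identifications in the doubling step — making rigorous the claim that the double of the standard $S^1\times D^3$ along the appropriate sub-bundle of its boundary is again the standard $S^1\times D^3$, and checking that under this identification the doubled sphere $\K$ really is carried to the standardly embedded core sphere (as opposed to some knotted sphere with the same complement). One should make sure the disk exterior's boundary decomposes as $\partial E = (S^3\setminus\nu K)\cup(S^1\times S^2\text{-piece along }K)$ correctly matches the $S^1\times D^2\subset S^1\times S^2$ picture, and that the homeomorphism $E\cong S^1\times D^3$ furnished by Freedman--Quinn can be taken to respect this boundary decomposition (or at least can be isotoped to do so, using that the $3$--manifold pieces are themselves standard). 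A secondary technical point is verifying that the locally flat category behaves well under this gluing (collaring, etc.), but this is standard for locally flat submanifolds of dimension $\le$ codimension-$2$ situations. I would cite~\cite{meier:double} and~\cite{freedman-quinn} for the surgery input and the unknotting statement.
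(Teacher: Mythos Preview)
The paper does not actually prove this theorem; it simply states the result and defers to~\cite{meier:double} for details. So there is no ``paper's own proof'' to compare against, only the standard argument recorded in that reference.

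That said, your proposal contains a genuine error. You assert that Freedman--Quinn's disk exterior $E = B^4 \setminus \nu(D)$ is \emph{homeomorphic} to $S^1 \times D^3$, and then attempt to compute the double directly from this model. But for any nontrivial knot $K$, this is false: the boundary $\partial E$ decomposes as $(S^3 \setminus \nu(K)) \cup_{T^2} (D \times S^1)$, and the solid torus $D \times S^1$ is glued in so that the longitude of $K$ bounds the disk factor. In other words, $\partial E$ is the zero-surgery $S^3_0(K)$. By Property~R (Gabai), $S^3_0(K) \cong S^1 \times S^2$ only when $K$ is the unknot, so $E$ cannot be $S^1 \times D^3$. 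Relatedly, the gluing region in the doubling is the knot exterior $S^3 \setminus \nu(K)$, not a solid torus $S^1 \times D^2$ as your model assumes; your explicit computation of the double therefore does not go through.

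The correct (and simpler) argument avoids any explicit identification of $E$. Freedman's disk has $\pi_1(E) \cong \Z$, generated by a meridian. Doubling and applying Seifert--van~Kampen over the gluing region $S^3 \setminus \nu(K)$ (whose fundamental group surjects onto each copy of $\Z$ via the meridian) gives $\pi_1(S^4 \setminus \K) \cong \Z$. One then invokes Freedman's unknotting theorem for locally flat $2$--spheres in $S^4$: any such sphere with knot group $\Z$ is topologically unknotted. You allude to this result at the end of your proposal, and in fact it is all that is needed---the detour through an explicit homeomorphism type for $E$ is both unnecessary and, as above, unavailable.
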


There are five knots, up to 12 crossing, with trivial Alexander polynomial. The first is the Conway knot $11_{n34}$.  
Theorem~\ref{thm:Freedman} shows that this knot is topologically doubly slice. Interestingly, it turns out that each of the other four knots is smoothly superslice; the double of the ribbon disk is an unknotted 2--sphere in $S^4$.  Thus we are led to the following problem, which at the moment seems inaccessible. 
\begin{problem} Find a smoothly slice knot $K$ with $\Delta_K(t) =1$ that is not smoothly superslice.
\end{problem}
 %The   following conjecture seems unlikely, but is in keeping with all the examples that have been studied.
%\begin{conjecture}
%Let $K$ be a slice knot with $\Delta_K=1$, then $K$ is superslice.
%\end{conjecture}
%\marginpar{\textcolor{blue}{Did you see what I added here?}}
Superslice knots were first studied by Gordon-Sumners~\cite{gordon-sumners}, who showed that the double of any slice knot is superslice and that for any superslice knot $K$, $\Delta_K(t) = 1$. 
%the converse to the above conjecture is true: the infinite cylic cover of the complement of a superslice knot is acyclic. 
Superslice knots were also were studied in relation to the Property R Conjecture~\cite{brakes,gordon:satellite, kirby-melvin:R}.

We remark that many infinite families of superslice knots can be created by taking any properly embedded arc in the complement of an unlink that is homotopic, but not isotopic, to the trivial arc connecting the two components and banding along the arc with some framing.  Changing the framing produces infinitely many knots in each family which can be distinguished from each other by their Jones polynomials.  For example, any of the three knots shown in Figure~\ref{fig:SuperRibbon12} gives rise to  such a family by adding twists to the band in each case.

%%%%%%%%%%%%%%%%%%%%%%%%%%%%%%%%%%%%%%%%%%%%%%%%%%%%%%%%
\subsection{Proof of Theorem~\ref{thm:Smooth}}\ 
%%%%%%%%%%%%%%%%%%%%%%%%%%%%%%%%%%%%%%%%%%%%%%%%%%%%%%%%

We are now equipped to prove our main result.

\begin{reptheorem}{thm:Smooth}
The following knots are smoothly doubly slice.
$$
\begin{array}{lllllll}
	9_{46} & 10_{99} & 10_{123} & 10_{155} & 11_{n42} & 11_{n49} & 11_{n74} \\
	12_{a0427} & 12_{a1105}  & 12_{n0268} & 12_{n0309} & 12_{n0313} & 12_{n0397} & 12_{n0414} \\
	 12_{n0430} & 12_{n0605} & 12_{n0636} & 12_{n0706} & 12_{n0817} & 12_{n0838} 
\end{array}
$$
Furthermore, the following are the only other prime knots with 12 crossings or fewer that could possibly be smoothly doubly slice.
$$
\begin{array}{lllllll}
	11_{n34} & 11_{n73} & 12_{a1019} & 12_{a1202} 
\end{array}
$$
\end{reptheorem}

\begin{proof}
The Kinoshita-Terasaka knot $11_{n42}$  was shown to be smoothly superslice in~\cite{carter-kamada-saito}.  
Figure~\ref{fig:SuperRibbon12} shows ribbon disks for $12_{n0313}$ and $12_{n0430}$.  It is easy to see that that each knot satisfies the hypotheses of Proposition~\ref{prop:super}; therefore, each of these knots is smoothly superslice, hence smoothly doubly slice.

The remaining 17 knots are shown in Figures~\ref{fig:TwoBand1},~\ref{fig:TwoBand2}, and~\ref{fig:TwoBand3}. With the exception of $12_{n0636}$, these knots all satisfy Theorem~\ref{thm:don1}.  The knot $12_{n0636}$ requires a pair of two-band systems, and is smoothly doubly slice by Theorem~\ref{thm:don2}.  (Note that the order in which the two-band systems are resolved doesn't matter in this case.)
\end{proof}

Portions of Theorem~\ref{thm:Smooth}  were previously
 known:  $9_{46}$ was first smoothly doubly sliced by Sumners~\cite{sumners:invertible}, 
 $11_{n42}$ by Carter, Kamada, and Saito~\cite{carter-kamada-saito}, and $10_{123}$ (private communication) and $11_{n74}$ 
 (in~\cite{donald:embedding}) by Donald.   

%%%%%%%%%%%%%%%%%%%%%%%%%%%%%%%%%%%%%%%%%%%%%%%%%%%%%%%%%%%%%%%%%%%%%%%%%%

%\marginpar{\textcolor{blue}{The tex for the table is now in the document, after ``end\{document\}''.}}

\begin{question}\label{question:Smooth}
Are any of the following knots smoothly doubly slice?
$$
\begin{array}{lllllll}
	11_{n34} & 11_{n73} & 12_{a1019} & 12_{a1202}
\end{array}
$$
\vskip.05in
\end{question}

Recall that $11_{n34}$ is  topologically doubly slice.  Other than this, Question~\ref{question:Smooth} applies equally well in the topological setting and covers all possibilities.  This completes our analysis. %which we summarize in Table~\ref{table:colors}.

%%%%%%%%%%%%%%%%%%%%%%%%%%%%%%%%%%%%%%%%%%%%%%%%%%%%%%%%
%%%%%%%%%%%%%%%%%%%%%%%%%%%%%%%%%%%%%%%%%%%%%%%%%%%%%%%%
\section{The double slice genus of knots}\label{sec:genus}
%%%%%%%%%%%%%%%%%%%%%%%%%%%%%%%%%%%%%%%%%%%%%%%%%%%%%%%%
%%%%%%%%%%%%%%%%%%%%%%%%%%%%%%%%%%%%%%%%%%%%%%%%%%%%%%%%

The study of doubly slice knots  can be placed in  the broader context of the relationship between knots in the 3--sphere and surfaces in the 4--sphere.  In this section, we will briefly describe this more general setting.

Let $\mathcal S$ be an orientable surface in $S^4$.  We say that $\mathcal S$ is \emph{unknotted} if $\mathcal S$ bounds a handlebody $H$ in $S^4$.  Let $\mathcal S$ be an unknotted surface in $S^4$, and suppose that $\mathcal S$ transversely intersects the standard $S^3$ in a knot $K$.  We say that $K$   \emph{divides}  $\mathcal S$.

Let $K$ be a knot in $S^3$ and let $F$ be a Seifert surface for $K$ with $g(F)=g$.  We think of $F\subset S^3\subset S^4$, where $S^3$ lies as the equator of $S^4$. Let $H=F\times[-1,1]$, with $H\cap S^3=F$; the surface $F$ is the intersection of a handlebody $H\subset S^4$ with $S^3$.  Let $\mathcal S=\pd H$. Then, $\mathcal S$ is an unknotted surface in $S^4$ (by definition) and $K=\mathcal S\cap S^3$.  It follows that every knot $K$ in $S^3$ divides an unknotted surface in $S^4$.

Therefore, we define $$g_{ds}(K)=\min\{g(\mathcal S)\ |\ \mathcal S\subset S^4, \text{\ $\mathcal S$ unknotted, and } \mathcal S\cap S^3=K \}.$$
We call $g_{ds}(K)$ the \emph{double slice genus} of $K$.  Note that $g_{ds}(K)=0$ if and only if $K$ is doubly slice.  Furthermore, we saw above that $g_{ds}(K)\leq 2g_3(K)$.  Similarly, it is clear that $2g_4(K)\leq g_{ds}(K)$.

The restriction $2g_4(K)\leq g_{ds}(K)\leq 2g_3(K)$ is already enough to determine the double slice genus for a third of the knots up to nine crossings.  A more detailed analysis will be the subject of future study by the authors.

%%%%%%%%%%%%%%%%%%%%%%%%%%%%%%%%  FIGURES %%%%%%%%%%%%%%%%%%%%%%%%%%%%%%%%%%
\begin{figure}
\centering
\includegraphics[scale = .5]{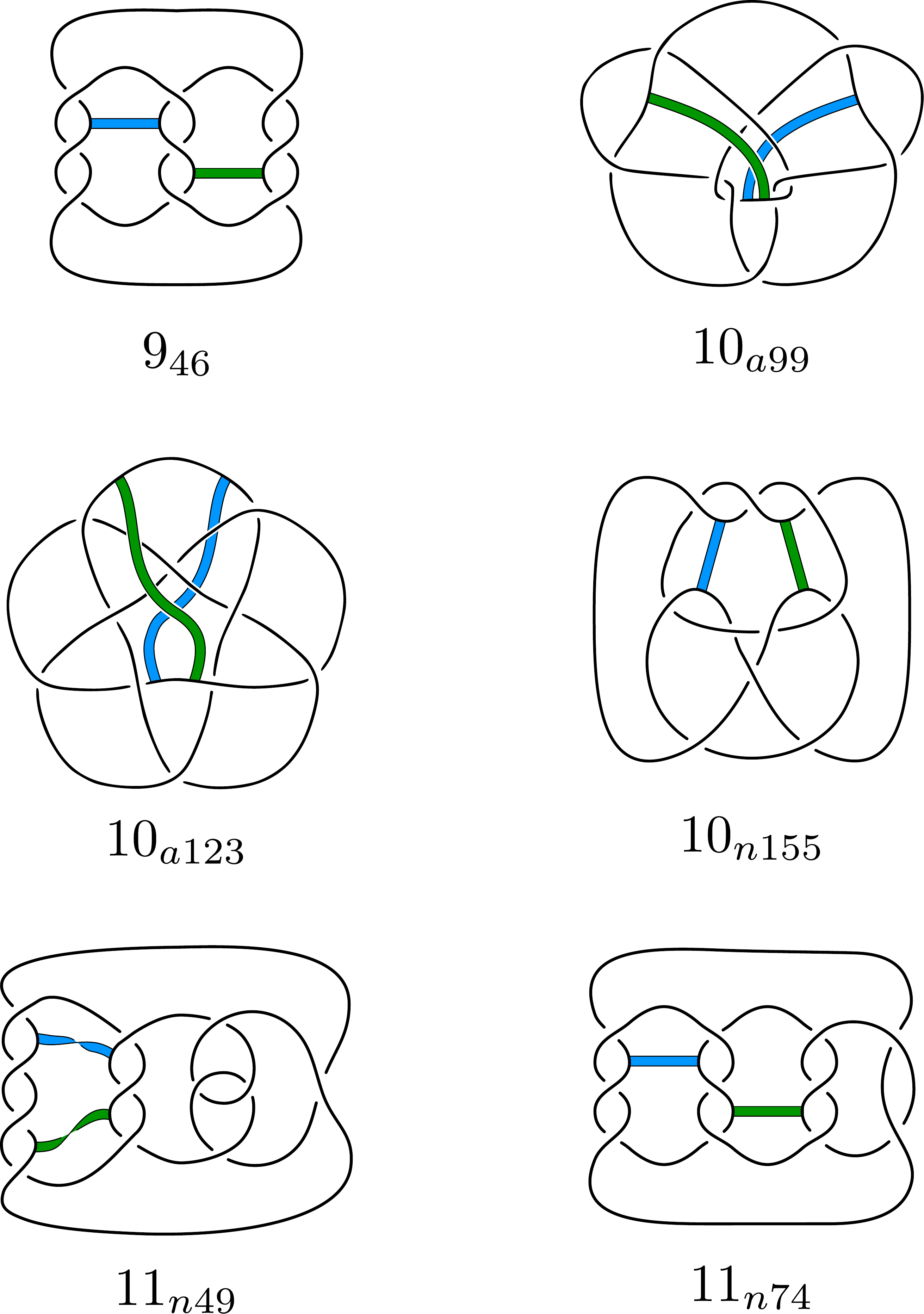}
\caption{The above knots are smoothly doubly slice.  See Subsection~\ref{subsec:bands}.}
\label{fig:TwoBand1}
\end{figure}

\begin{figure}
\centering
\includegraphics[scale = .43]{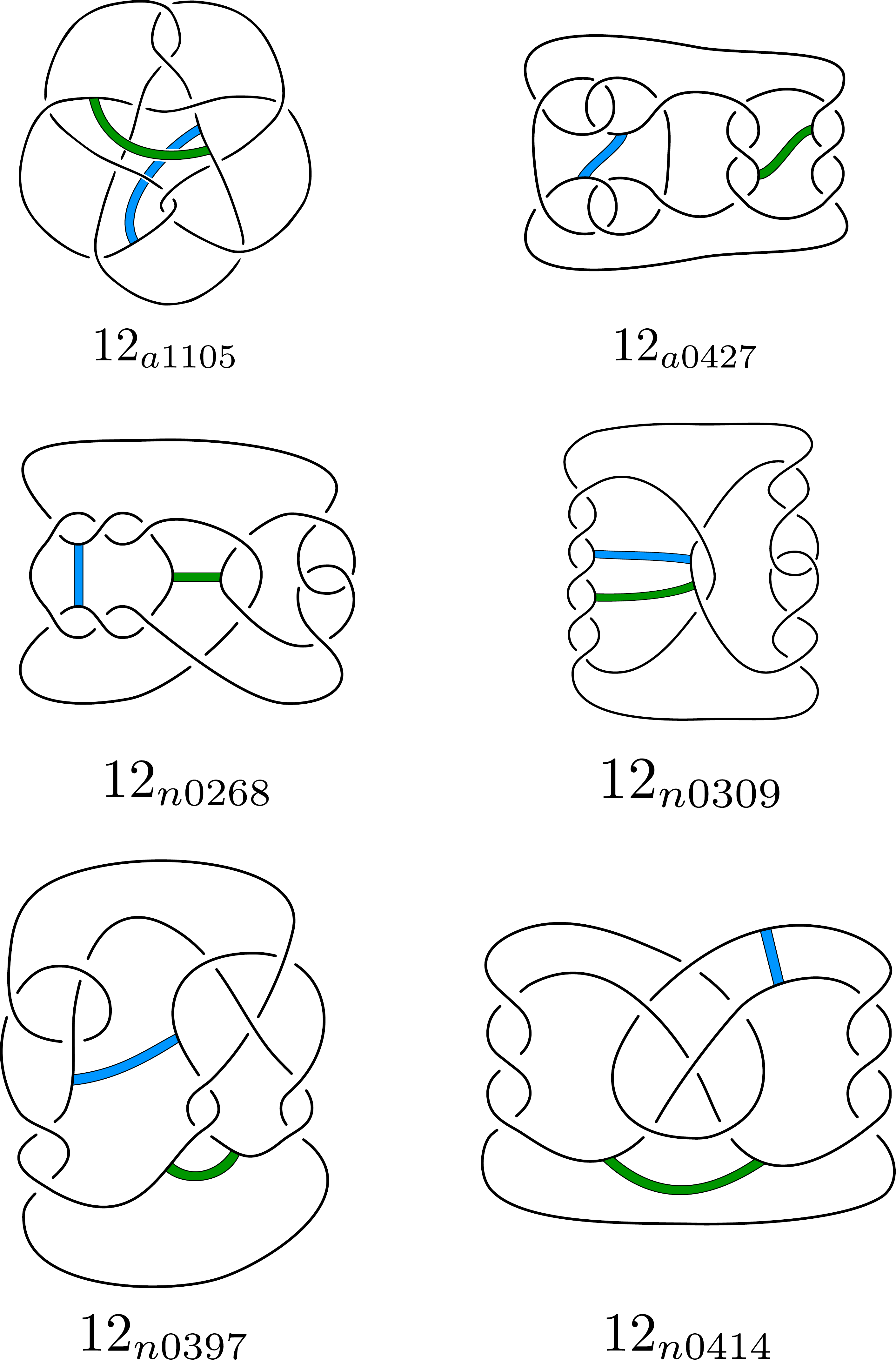}
\caption{The above knots are smoothly doubly slice.  See Subsection~\ref{subsec:bands}.}
\label{fig:TwoBand2}
\end{figure}

\begin{figure}
\centering
\includegraphics[scale = .4]{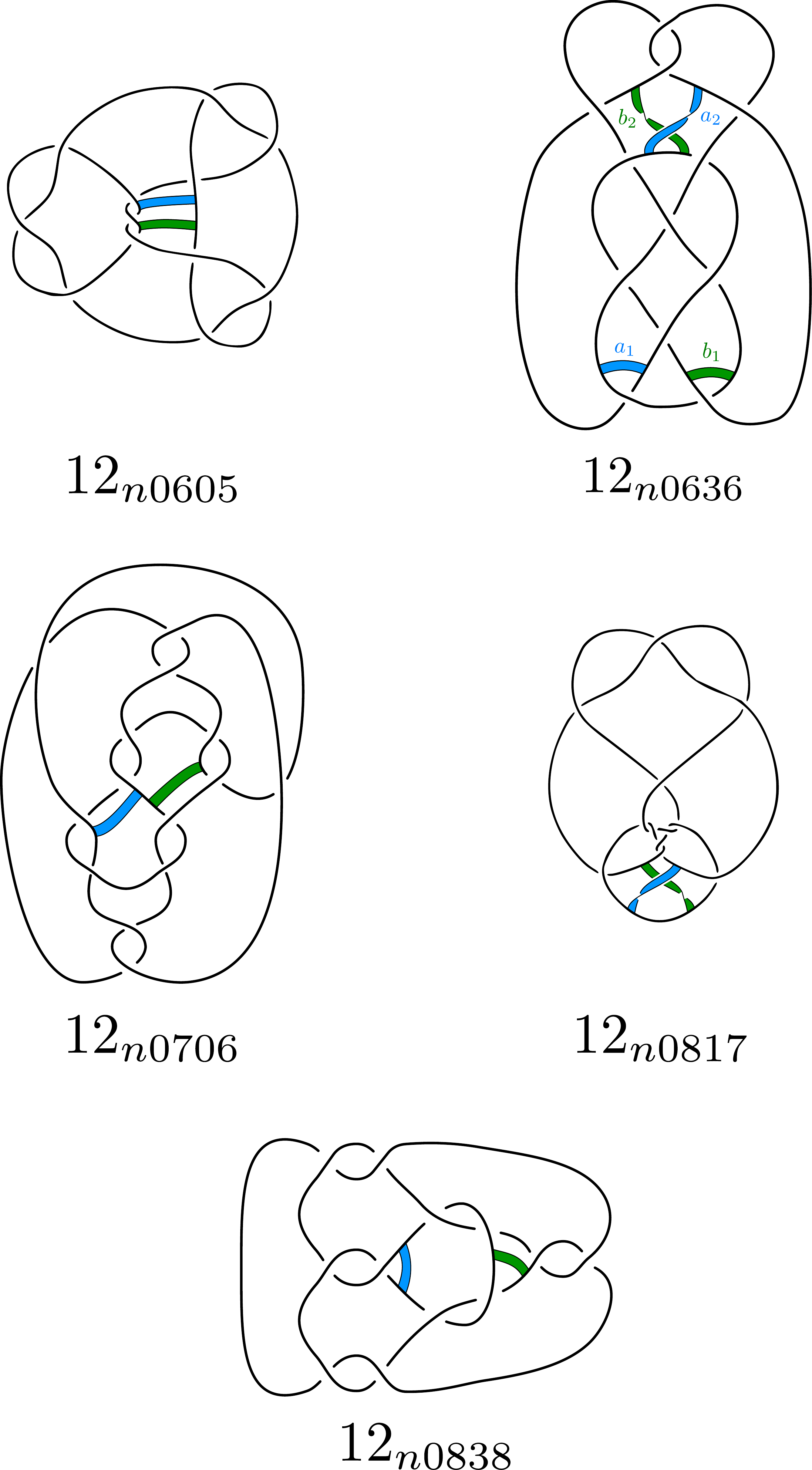}
\caption{The above knots are smoothly doubly slice.  See Subsection~\ref{subsec:bands}.}
\label{fig:TwoBand3}
\end{figure}
%%%%%%%%%%%%%%%%%%%%%%%%%%%%%%%%%%%%%%%%%%%%%%%%%%%%%%%%%%%%%%%%%%%%%%%%%%%

\clearpage

%%%%%%%%%%%%%%%%%%%%%%%%%%%%%%%%%%%%%%%%%%%%%%%%%%%%%%%%%%%%%%%%%%%
\bibliographystyle{abbrv}% abbrv  %ieeetr

\bibliography{UpTo12Biblio}
%%%%%%%%%%%%%%%%%%%%%%%%%%%%%%%%%%%%%%%%%%%%%%%%%%%%%%%%%%%%%%%%%%%

\end{document}